\documentclass[10pt]{article}
\usepackage{lmodern}
\usepackage{amsmath}
\usepackage[T1]{fontenc}
\usepackage[utf8]{inputenc}
\usepackage{authblk}
\usepackage{amsfonts}
\usepackage{graphicx}
\usepackage{rotating}
\usepackage{amssymb}
\usepackage[english]{babel}
\usepackage{color}
\usepackage{amsthm}
\usepackage{graphicx}
\usepackage{tkz-euclide}
\tikzset{elegant/.style={smooth,thick,samples=50,cyan}}
\usepackage{color,tikz}
\usetikzlibrary{patterns}
\usetikzlibrary{decorations.pathreplacing,calc}
\usepackage{rotating}
\usepackage{mathrsfs}
\usepackage{makecell}
\usepackage{microtype}
\usepackage{enumerate}
\usepackage{mathscinet}
\usepackage{array}
\usepackage{multirow}
\usepackage{booktabs}
\usepackage{enumerate}
\usepackage[cal=boondoxo,bb=ams]{mathalfa}
\usepackage{hyperref}
\hypersetup{hidelinks}
\usepackage{titlesec}
\usepackage{float}
\usepackage{tabularx}
\usepackage{booktabs}

\newtheorem{theorem}{Theorem}[section]
\newtheorem{proposition}{Proposition}[section]
\newtheorem{lemma}{Lemma}[section]

\newtheorem{remark}{Remark}[section]

\newcommand{\xii}{{\mathrm{|\xi|}}}

\newcommand{\supp}{{\mathrm{\,supp\,}}}

\renewcommand{\>}{\rangle}

\DeclareMathOperator{\lin}{lin}
\DeclareMathOperator{\nl}{non}

\title{Dispersive estimates for wave-type equations with time-dependent damping}
\author[1]{Halit Sevki Aslan\thanks{Halit S. Aslan (halitsevkiaslan@gmail.com)}}
\author[1]{Marcelo Rempel Ebert \thanks{Marcelo R. Ebert (ebert@ffclrp.usp.br) (corresponding author)}}
\affil[1]{Department of Computer Science and Mathematics (FFCLRP),
\newline University of S\~ao Paulo (USP), Ribeir\~ao Preto, SP 14040-901, Brazil}

\date{}
\numberwithin{equation}{section}
\allowdisplaybreaks

\newcommand{\R}{{\mathbb{R}}}
\newcommand{\Z}{{\mathbb{Z}}}

\newcommand{\low}{{\mathrm{low\,}}}

\newcommand{\BQ}{{\mathrm{BQ\,}}}
\begin{document}

\maketitle
\begin{abstract}
In this paper, we study the Cauchy problem for a class of semilinear evolution equations with scale-invariant time-dependent dissipation
\begin{equation*}
\begin{cases}
u_{tt} + L_{w^2}u + \dfrac{\mu}{1+t}u_t = \Delta^{\theta} f(u), & t>0,\ x\in\mathbb{R}^n,\\
u(0,x) = 0,\qquad u_t(0,x) = u_1(x), & x\in\mathbb{R}^n,
\end{cases}
\end{equation*}
where $\mu>0$, $f(u)=|u|^\alpha$ with $\alpha>1$, $\theta\in\{0,1\}$, and the operator $L_{w^2}$ is defined on the Fourier transform by multiplication by $w(\xi)^2$. We prove the global (in time) existence of small data solutions for $\alpha>\alpha_{\mathrm{crit}}$, where the critical exponent $\alpha_{\mathrm{crit}}$ depends on the choice of the operator $L_{w^2}$, the parameter $\mu$, and the nonlinear term. In particular, we consider two model cases. For Boussinesq-type operators with $w(\xi)=\sqrt{|\xi|^2+|\xi|^4}$, combined with the derivative-type nonlinearity $\Delta |u|^\alpha$, we obtain a Strauss-type critical exponent. On the other hand, for plate-type operators with $w(\xi)=|\xi|^\sigma$, $\sigma\geq2$, and power-type nonlinearity $|u|^\alpha$, the critical exponent is of Fujita type.
\medskip



\noindent\textbf{Keywords:} plate equation, Boussinesq equation, dispersive  estimates, global existence, Strauss exponent, Fujita exponent. \\
	
\noindent\textbf{AMS Classification (2020)}  35L15, 35L71, 35A01, 35B33.
	
\end{abstract}
\fontsize{12}{15}
\selectfont
\section{Introduction} \label{Sec:Introduction}
In this paper, we consider the following Cauchy problem for a semilinear evolution equations with scale-invariant time-dependent damping:
\begin{equation}\label{Eq:SemilinearProblem}
\begin{cases}
u_{tt} + L_{w^2}u + \dfrac{\mu}{1+t}u_t =\Delta^{\theta} f(u), & t>0,\ x\in\mathbb{R}^n,\\
u(0,x) = 0, \,\,\,\, u_t(0,x) = u_1(x), & x\in\mathbb{R}^n,
\end{cases}
\end{equation}
where $\mu> 0$,   $\theta\in \{0, 1\}$ and  the
 operator $L_{w^2}$ is defined via the Fourier transform by $ L_{w^2}u = \mathcal{F}^{-1}( w(\xi)^2\hat{u})$, where $\mathcal{F}$ denotes the Fourier transform on the space of tempered distributions $\mathcal{S}'$, and $\mathcal{F}^{-1}$ its inverse.
 Throughout the paper, the nonlinearity is regarded as a perturbation of the corresponding linear equation.
We assume that the nonlinearity $f$ satisfies
\begin{equation} \label{Eq:nonlinearity}
f(0)=0, \quad |f(u)-f(v)|\leq C\,|u-v|\big(|u|^{\alpha-1}+|v|^{\alpha-1}\big),
\end{equation}
for some $\alpha>1$. Assumption \eqref{Eq:nonlinearity} has the advantage of relying on Lipschitz-type regularity condition; several nonlinearities of classical interest, such as $|u|^\alpha$, $\pm u|u|^{\alpha-1}$, satisfy \eqref{Eq:nonlinearity}.
In order to treat the nonlinearity $\Delta|u|^\alpha$ in the framework of \eqref{Eq:nonlinearity}, we will consider initial data in homogeneous Sobolev spaces of negative order. Following \cite{Cho-Ozawa=2007} and the references therein, this assumption is commonly referred to as a ``vanishing condition''.


The Cauchy problem \eqref{Eq:SemilinearProblem} is often regarded as a \emph{critical model} in the sense that the asymptotic behavior of its solutions strongly depends on the size of the parameter $\mu$. Moreover, different choices of $w(\xi)$ lead to various well-known partial differential equations with scale-invariant damping, including the semilinear wave equation (Section \ref{Sect:wave-eq}), the plate equation (Section \ref{Sect:sigma-evol}), and the generalized Boussinesq equation (Section \ref{Sect:BQ}).

Wave equations with scale-invariant damping arise naturally in several problems from mathematical physics. In particular, such damping terms appear in compressible Euler equations with time-dependent dissipation, in the analysis of supersonic flows in divergent nozzles, and in generalized Tricomi-type models; see \cite{Li-Wang-Yin-2025} for a recent overview and further references. From the analytical viewpoint, the damping term $\frac{\mu}{1+t}u_t$ is critical in the sense that it separates diffusion-like behavior from predominantly hyperbolic dynamics. This critical structure is also reflected in the qualitative properties of solutions and in the determination of critical exponents for global existence and blow-up phenomena.

The main goal of this paper is to investigate the global (in time) existence of small data energy solutions to the semilinear problem \eqref{Eq:SemilinearProblem} under suitable assumptions on the initial data, the damping parameter $0<\mu<1$, the operator $L_{w^2}$, and the nonlinearity exponent $\alpha>1$.

Our analysis for the semilinear problem \eqref{Eq:SemilinearProblem} is based on $L^p-L^q$ decay estimates for the corresponding linear problem. In particular, for the plate-type model we employ $L^1-L^q$ estimates, and for the Boussinesq equation, by using dispersive estimates with the choice
\[ q=1+\alpha, \qquad p=q'=1+\frac{1}{\alpha}, \]
we are able to treat the nonlinear terms as a small perturbation of the associated linear problem, provided that the initial data is sufficiently small and the exponent $\alpha$ lies in the supercritical range.

 The asymptotic profile of solutions to \eqref{Eq:SemilinearProblem} strongly depends on the size of the damping parameter $\mu$. In the case $\mu>1$, the damping term produces stronger dissipative effects and the associated linear problem typically exhibits diffusion-like decay properties, which facilitate the treatment of the nonlinear terms. In contrast, the regime $0<\mu<1$ considered in this paper corresponds to a weakly damped hyperbolic behavior, where the dissipative mechanism is not sufficiently strong to dominate the dispersive effects. Consequently, the derivation of sharp decay estimates becomes substantially more delicate, especially for operators with non-homogeneous symbols such as those appearing in generalized Boussinesq-type equations. This weak damping regime therefore requires a finer analysis of the interplay between dispersion and dissipation.
\subsection*{The Strauss and the Fujita exponents}
The problem of determining the critical exponent for the semilinear wave equation
\[ u_{tt}-\Delta u = |u|^\alpha \]
goes back to \cite{John=1979}, who showed in dimension $n=3$ that global (in time) solutions with small initial data exist for $\alpha>1+\sqrt{2}$, while finite-time blow-up occurs for $\alpha\leq 1+\sqrt{2}$ under a suitable sign condition on the data. In \cite{Strauss=1981}, it was conjectured that the critical exponent for the wave equation is given by $\gamma(n-1)$, where $\gamma(n)$ is the positive root of the equation
\[ \frac{n}{2}\frac{\gamma-1}{\gamma+1}=\frac{1}{\gamma}. \]
This conjecture was later confirmed in \cite{GLS=1997} with the complete high-dimensional theory. Moreover, in \cite{Cho-Ozawa=2007} it was proved that $\gamma(n)$ is the critical exponent for the Boussinesq equation.  The shift from $\gamma(n)$ to $\gamma(n-1)$ is related to the structure of the phase function: the Hessian of $w(\xi)=|\xi|$ has rank $n-1$ for all $\xi\neq0$.

 On the other hand, the problem of determining the critical exponent for the semilinear damped wave equation
\[ u_{tt}-\Delta u + u_t = |u|^\alpha \]
was extensively studied in the literature. In particular, it was shown that the critical exponent coincides with the Fujita exponent $\alpha_{\text{Fuj}}=1+\frac{2}{n}$, that is well-known to be the critical exponent for the semilinear heat equation with power nonlinearity. More precisely, if $\alpha>\alpha_{\text{Fuj}}$, then global (in time) solutions with sufficiently small initial data exist, whereas for $\alpha\leq \alpha_{\text{Fuj}}$ solutions blow up in finite time under suitable assumptions on the initial data (see \cite{Todorova-Yordanov=2001} and \cite{Zhang=2001}). This reflects the diffusion phenomenon exhibited by the damped wave equation, whose asymptotic behavior is closely related to that of the heat equation.
\subsection{Wave equation with scale-invariant time-dependent damping} \label{Sect:wave-eq}
When $w(\xi) = |\xi|$, the operator $L_{w^2}$ reduces to the Laplacian, $L_{w^2} = -\Delta$, and \eqref{Eq:SemilinearProblem} becomes the semilinear wave equation with scale-invariant damping:
    \begin{equation}\label{Eq:Dabbicco=2014}
    \begin{cases}
    u_{tt} - \Delta u + \dfrac{\mu}{1+t} u_t = |u|^\alpha, & t \geq 0, \ x \in \mathbb{R}^n, \\
    u(0,x) = 0, \quad u_t(0,x) = u_1(x), & x \in \mathbb{R}^n,
    \end{cases}
    \end{equation}
where $\mu>0$ and $\alpha>1$.  This model has been extensively studied in the literature. It is conjectured that the small data Cauchy problem \eqref{Eq:Dabbicco=2014} with $u_1\in L^1(\mathbb{R}^n)$ admits a critical exponent of the form
\[ \alpha_{\text{crit}} = \max\left\{ \gamma(n+\mu)\,,\,\alpha_{\text{Fuj}}(n) \right\}, \qquad \text{that is}, \qquad \alpha_{\text{crit}} =
\begin{cases}
\gamma(n+\mu) & \text{if} \quad 0<\mu<\mu_*(n), \\
\alpha_{\text{Fuj}}(n) & \text{if} \quad \mu\geq \mu_*(n),
\end{cases} \]
where $\alpha_{\text{Fuj}}(n)=1+2/n$ and
\[ \mu_*(n) = \frac{n^2+n+2}{n+2} \]
satisfying $\gamma(n+\mu_*)=\alpha_{\text{Fuj}}(n)$. This conjecture has been partially verified in \cite{Dabbicco=2014, Dabbicco=2021, Dabbicco-Lucente=2013, Dabbicco-Lucente-Reissig=2015, Ikeda-Sobajima=2018}. \\
Recently,  in \cite{DA26} it was proved that $\alpha_{\text{crit}} = 1+\frac{4}{n}$ is the critical exponent to \eqref{Eq:Dabbicco=2014} under the assumption of small data  $u_1\in L^2(\mathbb{R}^n)$ and $\mu\geq 1$ if $n=1,2,3$ or $\mu\geq \frac{2n}{n+3}$ if $n\geq 4$.

Cauchy problems for the  semilinear   scale-invariant damping
wave models with time-dependent speed of propagation $u_{tt} - (1+t)^{2\ell}\Delta u + \dfrac{\mu}{1+t} u_t = |u|^\alpha$, with $\ell\in \mathbb{R}$, have been studied by many
 authors in a series of papers (see \cite{ AER23, GY25, TW22} and references therein).
  These models belong to the family of equations arising in Friedmann-Lema\^itre-Robertson-Walker (FLRW) spacetimes and are used to describe accelerating/decelerating expanding universe models.
\subsection{$\sigma$-evolution equation with scale-invariant time-dependent damping} \label{Sect:sigma-evol}
If $w(\xi) = |\xi|^\sigma$ with $\sigma > 0$, then $L_{w^2} = (-\Delta)^\sigma$, and \eqref{Eq:SemilinearProblem} becomes the following semilinear $\sigma$-evolution equation with scale-invariant time-dependent damping:
    \begin{equation}\label{Ex1:EbertMarquesNunes}
    \begin{cases}
    u_{tt} + (-\Delta)^\sigma u + \dfrac{\mu}{1+t} u_t = |u|^\alpha, & t \geq 0, \ x \in \mathbb{R}^n, \\
    u(0,x) = 0, \quad u_t(0,x) = u_1(x), & x \in \mathbb{R}^n,
    \end{cases}
    \end{equation}
where $\mu \geq 0$, $\sigma > 0$, and $\alpha > 1$. The choice $w(\xi) = |\xi|^2$ yields $L_{w^2} = \Delta^2$ and this describes the dynamics of a plate with a time-dependent damping mechanism.

For $\sigma>1$ and $\mu=0$, assuming small initial data in $L^1(\mathbb{R}^n)\cap L^2(\mathbb{R}^n)$, the authors of \cite{Ebert-L} proved that, for $\sigma<n\leq 2\sigma$, the exponent $\alpha=\frac{n+\sigma}{(n-\sigma)_+}$ is the critical exponent distinguishing blow-up and global existence for \eqref{Ex1:EbertMarquesNunes}.

The model \eqref{Ex1:EbertMarquesNunes} with $\sigma>1$ has been recently investigated in \cite{Ebert-Marques-Nunes=2024}, where some global existence results for small data solutions were obtained.  In particular, the authors proved that
\begin{itemize}
\item if $1\leq n<2\sigma$ and  $\mu>\max\{\frac{4n}{n+2\sigma}\,,\,1 \}$, then the Fujita-type exponent $p=1+\frac{2\sigma}{n}$ is the critical exponent distinguishing blow-up and global existence by assuming small initial data in $L^1(\mathbb{R}^n)\cap L^r(\mathbb{R}^n)$ where $r\in[1,2]$;
\item if $1\leq n<\sigma$ and $1-n/\sigma<\mu<\min\{1\,,\,\mu^{\star}\}$, then the critical exponent is a shift of Fujita exponent $\alpha=\alpha_{\text{Fuj}}(\mu+n/\sigma-1)$ by assuming small initial data in $L^1(\mathbb{R}^n)\cap L^2(\mathbb{R}^n)$. Here $\mu^{\star}$ is defined by $\mu^\star = \frac{1}{2}\big( 1-n/\sigma +\sqrt{(1-n/\sigma)^2+8(1-n/\sigma)} \big)$.
\end{itemize}
\subsection{Boussinesq equation with scale-invariant time-dependent damping} \label{Sect:BQ}
Choosing $w(\xi) = \sqrt{|\xi|^{2} + |\xi|^{4}}$, leads to $L_{w^2} = -\Delta + \Delta^2$, where the low frequency and high frequency behaviors are governed by different Laplacians. Namely, we consider
\begin{equation}\label{Eq:Boussinesq}
\begin{cases}
u_{tt} - \Delta u + \Delta^2 u + \dfrac{\mu}{1+t} u_t = \Delta f(u) , & t\geq0, \ x\in \mathbb{R}^n,  \\
u(0,x)=0, \quad u_t(0,x)=u_1(x), & x\in \mathbb{R}^n,
\end{cases}
\end{equation}
where $0<\mu<1$ and $f(u)=|u|^\alpha$ with $\alpha>1$. For discussions on the physical background and modeling aspects of the damped Boussinesq equations, we refer the reader to \cite{Wang-Luo-Li=2023}, where the authors studied the following Cauchy problem for the generalized Boussinesq equation with weak damping:
\begin{equation}\label{Boussinesq-with-damping}
\begin{cases}
u_{tt}-\Delta u+\Delta^2 u + u_t = \Delta f(u) , & t\geq0, \ x\in \mathbb{R}^n,  \\
u(0,x)=u_0(x), \quad u_t(0,x)=u_1(x), & x\in \mathbb{R}^n,
\end{cases}
\end{equation}
where the nonlinear term $f(u)$ behaves as a power type satisfying $f(u)=O(|u|^2)$ as $u\to 0$. They established the global existence and time-decay rates of solutions to the Cauchy problem \eqref{Boussinesq-with-damping} by applying the energy method together with suitable interpolation inequalities. Moreover, the existence, uniqueness, and asymptotic stability of time-periodic solutions to the equation in \eqref{Boussinesq-with-damping} were investigated in \cite{Wang-Li=2018}.

Although the coefficient in front of the second-order operator is constant in \eqref{Eq:Boussinesq}, time-dependent coefficients naturally arise in several related beam and plate models. For instance, the authors in \cite{DAE-ASY=2021} investigated damped plate equations with a time-dependent coefficient of the form $u_{tt}-\lambda(t)\Delta u+\Delta^2 u + u_t =0$, while, in \cite{Yoshikawa-Wakasugi=2018}, the authors studied the asymptotic behavior of solutions to damped beam equations with variable coefficients, which may be regarded as the case $n=1$ of the previous model. More recently, the asymptotic behavior of solutions to a nonlinear beam equations with two time-dependent coefficients, $u_{tt}-\lambda(t)\partial_x^2 u+\partial_x^4 u + b(t) u_t =\partial_x f(\partial_x u)$, was studied  in \cite{HWY=2024}. Motivated by these developments, it is natural to investigate related higher-order models with time-dependent damping. Nevertheless, to the best of our knowledge, the Cauchy problem \eqref{Eq:Boussinesq} with scale-invariant damping has not yet been addressed in the literature.

On the other hand, the undamped Boussinesq equation (that is, when $\mu\equiv0$) have drawn much attention by many researchers due to the wide applications in the real world (see \cite{Bona-Sachs=1988, Cho-Ozawa=2007, Farah=2009A, Linares1993, Linares1995, Liu=1997}). The classical form
\[ u_{tt}-u_{xx}+u_{xxxx} = (u^2)_{xx}, \]
was first derived by Boussinesq in 1872 in \cite{Boussinesq1872} to describe the propagation of long waves with small amplitude on the surface of shallow water, where $u=u(x,t)$ denotes the elevation of the free fluid surface. Beyond water waves, the Boussinesq equation also models nonlinear vibrations of strings and electromagnetic waves in nonlinear dielectric materials (see \cite{Turitsyn1993, Zakharov1973}), among other physical phenomena.
\medskip

\noindent\textbf{Notation:}
\begin{itemize}
\item For any multi-index $\gamma=(\gamma_1,\ldots,\gamma_n)\in\mathbb{N}^n$, we use the notation
\[ \partial_x^\gamma = \partial_{x_1}^{\gamma_1}\cdots\partial_{x_n}^{\gamma_n}, \qquad \xi^\gamma=\xi_1^{\gamma_1}\cdots \xi_n^{\gamma_n}, \qquad |\gamma|=\gamma_1+\cdots +\gamma_n. \]
\item Let $\hat f=\mathcal{F}f$ denote the Fourier transform, with respect to the spatial variable $x$, of a tempered distribution $f$. If $f\in L^1(\mathbb{R}^n)$, then
\[ \hat f(\xi) = \int_{\mathbb{R}^n} e^{-ix\cdot\xi}f(x)\,dx, \]
where $x\cdot\xi=\sum_{j=1}^n x_j\xi_j$. The inverse Fourier transform is given by $\mathcal{F}^{-1}f(x) = (2\pi)^{-n}\mathcal{F}f(-x)$.
\item We write $(a)_+:=\max\{a\,,\,0\}$ and, as usual, $1/(a)_+:=+\infty$ if $a\leqslant0$.
\item For $p\in[1,\infty]$, $p'$ denotes its conjugate exponent, i.e., $\frac1p+\frac1{p'}=1$.
\item $L_p^q = L_p^q(\mathbb{R}^n)$ denotes the space of tempered distributions $T\in \mathcal{S}'(\R^n)$ such that $T\ast f\in L^q(\R^n)$ for any $f\in \mathcal{S}(\R^n)$, where $\mathcal{S}(\R^n)$ stands for the space of rapidly decreasing Schwartz functions, and
\[ \|T\ast f\|_{L^q} \leq C\|f\|_{L^p} \]
for all $f\in \mathcal{S}(\R^n)$ with a constant $C>0$, which is independent of $f$. In this case, the operator $T\ast$ is extended by density to the space $L^p(\R^n)$.

\item $M_p^q = M_p^q(\mathbb{R}^n)$ stands for the set of Fourier transforms $\hat{T}$ of distributions $T\in L_p^q$, equipped by the norm
\[ \|m\|_{M_p^q} := \sup\big\{ \big\| \mathcal{F}^{-1}\big( m\mathcal{F}(f) \big) \big\|_{L^q}: f\in\mathcal{S}(\R^n), \|f\|_{L^p} = 1 \big\}, \]
and we set $M_p = M_p^p$. The elements in $ M_p^q$ are called multipliers of type $(p,q)$.

\item The pseudo-differential operators $|D|$ and $\langle D\rangle$ are defined through their symbols $|\xi|$ and $\langle\xi\rangle$, respectively, where $
\langle\xi\rangle := \sqrt{1+|\xi|^2}$ denotes the Japanese bracket.
\end{itemize}
%
\section{Main results} \label{Sec:Main-Results}
Our first goal is to fill some gaps in the global existence results for small data solutions to \eqref{Ex1:EbertMarquesNunes} obtained in the previous works \cite{Ebert-L} and \cite{Ebert-Marques-Nunes=2024}. Applying the dispersive estimates derived in Proposition \ref{Prop1H}, we obtain the following result.
\begin{theorem}\label{thm:Nonlinear-plate-1}
Let $\sigma\geq 2$, $1\leq n<2\sigma$ and $\big(1-\frac{n}{\sigma}\big)_+<\mu<\min\big\{ 1, 2-\frac{n}{\sigma} \big\}$. Assume that
\[ 1+\frac{2}{\mu+\dfrac{n}{\sigma}-1}<\alpha. \]
Then, for sufficiently small initial data
\[ u_1\in \mathcal A:=L^1(\mathbb R^n)\cap L^2(\mathbb R^n) \]
there is a unique global (in time) solution $u\in \mathcal{C}\big([0,\infty);L^2(\mathbb R^n)\cap L^r(\mathbb R^n)\big)$ for all $r\in (\alpha,\infty)$ to \eqref{Ex1:EbertMarquesNunes}.
Moreover, the solution satisfies the following decay estimates for all $t\geq 0$ and $q\in[r,\infty)$:
\begin{equation}\label{eq:decayNP-1}
\|u(t,\cdot)\|_{L^q} \lesssim (1+t)^{1-\mu-\frac{n}{\sigma}\left(\frac{1}{1+\delta}-\frac{1}{q} \right)} \|u_1\|_{\mathcal A},
\end{equation}
 for any $\delta>0$ such that $(1+\delta)\alpha<r$.
\end{theorem}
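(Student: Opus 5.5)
The plan is a Banach fixed point argument in a weighted space of the form
\[
X(T)=\mathcal C\big([0,T];L^2\cap L^r\big),\qquad
\|u\|_{X(T)}=\sup_{0\le t\le T}\ \sup_{q\in\{2\}\cup[r,\infty)}(1+t)^{-\beta(q)}\|u(t,\cdot)\|_{L^q},
\]
where $\beta(q)=1-\mu-\frac n\sigma\big(\frac1{1+\delta}-\frac1q\big)$ for $q\ge r$, and $\beta(2)$ is the corresponding $L^1\cap L^2\to L^2$ rate. I will write the solution by Duhamel's principle. Let $E(t,s,x)$ be the fundamental solution of the linear scale-invariant problem with data $u(s)=0$, $u_t(s)=\delta_0$. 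Then
\[
u(t)=E(t,0)\ast u_1+\int_0^t E(t,s)\ast f(u(s))\,ds=:u^{\lin}+u^{\nl},
\]
and I define $Nu$ as the right-hand side.

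First I would estimate the linear part using Proposition~\ref{Prop1H}, which I take to give $L^p$--$L^q$ estimates of the type
\[
\|E(t,s)\ast g\|_{L^q}\lesssim (1+s)^{\mu}(1+t)^{1-\mu}\,(t-s)^{-\frac n\sigma(\frac1p-\frac1q)}\|g\|_{L^p}.
\]
Here $(1+s)$ is shifted by the damping, and there is a possible extra factor $(1+s)$ coming from the Bessel-function structure when $\mu<1$. To avoid the non-integrable $L^1$ singularity I will use $p=1+\delta$ rather than $p=1$, interpolating $u_1\in L^1\cap L^2$ into $L^{1+\delta}$. This is exactly why $\delta$ appears in \eqref{eq:decayNP-1}. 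Together with Hausdorff--Young type bounds for the small-time and high-frequency part, which are available since $\sigma\ge2$ and $n<2\sigma$, this gives $\|u^{\lin}\|_{X(T)}\lesssim\|u_1\|_{\mathcal A}$.

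For the nonlinear part I split $\int_0^t$ into $[0,t/2]$ and $[t/2,t]$.
\begin{itemize}
\item On $[0,t/2]$ I apply the estimate with $p=1+\delta$. This needs $\|\,|u(s)|^\alpha\|_{L^{1+\delta}}=\|u(s)\|^\alpha_{L^{(1+\delta)\alpha}}$, which is controlled by the $X$-norm because $(1+\delta)\alpha<r$ lies in the interpolation range between $L^2$ and $L^r$. The resulting $s$-integral is
\[
\int_0^{t/2}(1+s)^{\mu+\alpha\beta((1+\delta)\alpha)}\,ds.
\]
The condition $\alpha>1+\frac{2}{\mu+n/\sigma-1}$, with $\delta$ small, is exactly what makes the exponent less than $-1$, so the integral is bounded and we recover the linear rate.
\item On $[t/2,t]$ I use $p$ close to $q$, or an $L^{q/\alpha}$--$L^q$ estimate. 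Since $(1+s)\sim(1+t)$ there, the factor $(t-s)^{-\frac n\sigma(\frac1p-\frac1q)}$ is integrable because $\frac n\sigma<2$.
\end{itemize}

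The Lipschitz estimate $\|Nu-Nv\|_{X}\lesssim\|u-v\|_X\big(\|u\|_X^{\alpha-1}+\|v\|_X^{\alpha-1}\big)$ follows identically, using \eqref{Eq:nonlinearity} and Hölder. Small data then gives a contraction, global existence, and \eqref{eq:decayNP-1}.

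The main obstacle I expect is the exponent bookkeeping on $[0,t/2]$. Because $\mu<1$, the $s$-weight $(1+s)^{\mu}$, or possibly $(1+s)$, from the linear estimates competes with the decay of $u$. One must check that the restriction $(1-\frac n\sigma)_+<\mu<\min\{1,2-\frac n\sigma\}$ ensures three things: $\beta$ is negative for large $q$, the exponent $\mu+\alpha\beta$ is below $-1$, and the $L^2$ component (which grows at worst like $(1+t)^{1-\mu-\frac n{2\sigma}}$) closes. If the direct choice fails, the first thing I would try is moving the splitting point or choosing $p$ on $[0,t/2]$ nearer to $1$ as $\delta\to0$.
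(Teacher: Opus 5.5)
Your skeleton matches the paper's: a contraction in a space weighted by $(1+t)^{-\beta(q)}$, Proposition~\ref{Prop1H} with $p=1+\delta$, H\"older's inequality to reduce to $\|u(s)\|_{L^{(1+\delta)\alpha}}^{\alpha}$, and integrability of $(1+s)^{\mu+\alpha\beta((1+\delta)\alpha)}$, which gives exactly the threshold $\alpha>1+\frac{2}{\mu+n/\sigma-1}$. Your computation on $[0,t/2]$ is the paper's computation. The gap is the piece $[t/2,t]$. It rests on a misreading of Proposition~\ref{Prop1H}, and as you describe it, it does not go through. The proposition is available only for $1<p\le 2\le q<\infty$. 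Since $\mu<2-\frac n\sigma$ forces $\mu+\frac n\sigma-1<1$, the hypothesis gives $\alpha>3$, so every $q\ge r>\alpha$ exceeds $2$. Hence taking $p$ close to $q$ is never admissible, and $p=q/\alpha$ is admissible only for $q\le 2\alpha$. Moreover, integrability of $(t-s)^{-\frac n\sigma(\frac1p-\frac1q)}$ requires the exponent to be below $1$, not $\frac n\sigma<2$; the latter suffices only for $p=2$.

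The repair, which is what the paper does, is to use Proposition~\ref{Prop1H} as actually stated. Its bound $(1+t)^{1-\mu-\frac n\sigma(\frac1p-\frac1q)}(1+s)^{\mu}\|g\|_{L^p\cap L^2}$ has no singularity at $t=s$: the high-frequency part is paid for with the $L^2$ norm, where $d(2,q)=0$. It also already carries the full decay in $1+t$. So no splitting is needed. Apply it with $p=1+\delta$ on all of $[0,t]$, and your exponent count for $[0,t/2]$ closes the estimate verbatim. The weight is $(1+s)^{\mu}$, not possibly $(1+s)$; with $(1+s)$ you would not reach the stated threshold.

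What you must add is the $L^2$ part of the source that the proposition requires, namely $\||u(s)|^\alpha\|_{L^2}=\|u(s)\|_{L^{2\alpha}}^{\alpha}$. Your norm controls it, either directly or by interpolation between $L^2$ and $L^r$. Since $\beta$ is decreasing in $q$ and $2\alpha>(1+\delta)\alpha$, this term decays faster, so the threshold is unchanged.

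Finally, the paper takes the supremum over the compact range $q\in[2,r]$. A supremum over $[r,\infty)$ would need the constants in Proposition~\ref{Prop1H} to be uniform as $q\to\infty$, which the proposition does not provide (it excludes $q=\infty$). It is safer to run the contraction over a compact range of $q$ containing $2$, $(1+\delta)\alpha$ and $2\alpha$. You can then read off \eqref{eq:decayNP-1} for each fixed $q$ directly from Duhamel's formula, since its right-hand side only involves those norms.
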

\begin{remark}
Theorem \ref{thm:Nonlinear-plate-1} improves the global existence result stated in Theorem 2.1 in \cite{Ebert-Marques-Nunes=2024} in several aspects. First, the assumption on the space dimension is relaxed from $n<\sigma$ to $n< 2\sigma$. Second, the upper bound for damping parameter is simplified. In \cite{Ebert-Marques-Nunes=2024}, the parameter $\mu$ is required to satisfy
\[ 1-\frac{n}{\sigma}<\mu<\min\{1,\mu^\star\}, \]
where $\mu^\star$ is defined at the end of Section \ref{Sect:sigma-evol}.

Moreover, the restriction on the exponent of the nonlinearity is replaced by
\[ 1+\frac{2}{\mu+\frac{n}{\sigma}-1}<\alpha, \]
instead of the interval
\[ 1+\frac{2}{\mu+\frac{n}{\sigma}-1}<\alpha \leq \frac{2\gamma_1}{(2\gamma_1+\mu-2)_+}, \quad \gamma_1=\frac{n}{\sigma} \]
appearing in \cite{Ebert-Marques-Nunes=2024}. Namely, we have a result without imposing an upper bound on the power of the nonlinearity in Theorem 2.1.

Theorem \ref{thm:Nonlinear-plate-1} yields a simpler description of the decay behavior together with a broader range of admissible parameters. Finally, duo to the lack of $L^1-L^q$ estimates in Proposition 3.2, we have the loss  of decay  $(1+t)^{\frac{n\delta}{\sigma(1+\delta)}}$  in \eqref{eq:decayNP-1}  for solutions to \eqref{Ex1:EbertMarquesNunes}.
\end{remark}

Our second goal is to derive the Strauss-type exponent to \eqref{Eq:Boussinesq}.
\begin{theorem}\label{thm:NBQ}
Let $\mu\in (0,1)$. Assume that
\[ \alpha_{\text{crit}}<\alpha<1+\frac{4}{(n-2)_+}, \]
where $\alpha_{\text{crit}}$ is the positive solution to
\begin{equation}\label{eq:StraussBoussinesq}
\frac{n}2\frac{\alpha-1}{\alpha+1}+\frac{\mu}{2}=\frac1\alpha\Big( 1+\frac{\mu}{2} \Big).
\end{equation}
If $n\geq 2$, then for sufficiently small initial data
\[ u_1\in\mathcal{A}:= \dot H^{-1, 1+\frac1\alpha}(\mathbb{R}^n)\cap L^{1+\frac1\alpha}(\mathbb{R}^n),\]
there is a unique global (in time)  solution $u\in L^\infty\big( [0, \infty), L^{\alpha+1}(\mathbb{R}^n) \big)$ to \eqref{Eq:Boussinesq}.

Moreover, the solution satisfies the following decay estimates:
\begin{equation}\label{eq:decayNIBQ}
\|\, u(t,\cdot)\|_{L^{\alpha+1}} \lesssim (1+t)^{-\delta_\BQ-\frac{\mu}2}\|u_1\|_{\mathcal{A}},\quad \delta_\BQ=\frac{n}2 \frac{\alpha-1}{\alpha+1}.
\end{equation}
In space dimension $n=1$, the result holds replacing $\mathcal{A}$ by
\[\mathcal{A}:= \dot H^{-\frac32, 1+\frac1\alpha}(\mathbb{R}^n)\cap H^{-1, 1+\frac1\alpha}(\mathbb{R}^n).\]
\end{theorem}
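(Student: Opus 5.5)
We argue by a contraction mapping in the weighted space
\[ X=\Big\{u\in L^\infty\big([0,\infty),L^{\alpha+1}(\mathbb R^n)\big):\ \|u\|_X:=\sup_{t\geq0}(1+t)^{\delta_\BQ+\frac{\mu}{2}}\|u(t,\cdot)\|_{L^{\alpha+1}}<\infty\Big\}. \]
We write the solution of \eqref{Eq:Boussinesq} by Duhamel's principle as
\[ u(t)=E_1(t,0)u_1+\int_0^t E_1(t,s)\,\Delta f(u(s))\,ds=:u^{\lin}(t)+u^{\nl}(t), \]
where $E_1(t,s)$ is the propagator of the linear equation with data $(0,g)$ prescribed at time $s$. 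Its Fourier symbol is expressed through Bessel functions of order $\nu=\frac{1-\mu}{2}$, evaluated in the arguments $(1+t)w(\xi)$ and $(1+s)w(\xi)$, with $w(\xi)=\sqrt{|\xi|^2+|\xi|^4}$. The key linear input is a dispersive $L^{p}$–$L^{q}$ estimate with $q=1+\alpha$ and $p=q'=1+\frac1\alpha$. We expect it to have the form
\[ \big\|E_1(t,s)\,|D|^{2}g\big\|_{L^{q}}\lesssim (1+t)^{-\frac{\mu}{2}}(1+s)^{\frac{\mu}{2}}(t-s)^{-n\left(\frac1p-\frac12\right)}(1+s)^{\ldots}\,\||D|\,g\|_{L^{p}}, \]
with the precise powers supplied by the preceding dispersive propositions of the paper; we take these as given.

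These estimates are proved by separating frequencies. For $|\xi|\lesssim1$ the phase behaves like $|\xi|$, and its Hessian has rank $n-1$. For $|\xi|\gtrsim1$ it behaves like $|\xi|^2$, with a nondegenerate Hessian. The Boussinesq symbol, however, has a nondegenerate Hessian at all $\xi\neq0$: the radial second derivative of $w$ is nonzero at small frequencies, since $w\approx|\xi|+|\xi|^3/2$. This is why the exponent has Strauss type $\gamma(n)$ rather than $\gamma(n-1)$, shifted by $\mu$. We interpolate the $L^2$–$L^2$ bound with the $L^1$–$L^\infty$ bound $t^{-n/2}$ to obtain the decay $(t-s)^{-2\delta_\BQ}$ for $p=q'$. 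For the data term this gives $\|u^{\lin}(t)\|_{L^{q}}\lesssim(1+t)^{-\delta_\BQ-\frac\mu2}\|u_1\|_{\mathcal A}$. The negative-order space $\dot H^{-1,p}$ absorbs the derivative loss at low frequency, and $L^p$ controls high frequency. In $n=1$ the weaker dispersion forces the stronger vanishing condition $\dot H^{-3/2,p}$.

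For the Duhamel term we put $\Delta$ on the kernel. The factor $|D|^2=|D|\cdot|D|$ is split, one $|D|$ being used by the smoothing or oscillation of $E_1$. This leaves us to bound $\|f(u(s))\|_{L^{p}}$ and $\||D|^{-1}\Delta f\|$-type quantities. Here $\|f(u)\|_{L^{p}}=\|u\|_{L^{q}}^{\alpha}$, because $\alpha p=q$; this is exactly why the pair $(q,q')$ is chosen. The upper bound $\alpha<1+\frac{4}{(n-2)_+}$ makes $q$ admissible, since $2\delta_\BQ<2$ is needed for local integrability in $t-s$ and for the multiplier bounds at high frequency. We then estimate
\[ (1+t)^{\delta_\BQ+\frac\mu2}\|u^{\nl}(t)\|_{L^{q}}\lesssim (1+t)^{\delta_\BQ}\int_0^t (t-s)^{-2\delta_\BQ}(1+s)^{\frac\mu2-\alpha(\delta_\BQ+\frac\mu2)}\,ds\,\|u\|_X^\alpha. \]
We split the integral into $[0,t/2]$ and $[t/2,t]$. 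On $[0,t/2]$ we need $\alpha(\delta_\BQ+\frac\mu2)-\frac\mu2>1$, or a logarithm-free balance against the factor $(1+t)^{-\delta_\BQ}$. On $[t/2,t]$ integrability requires $2\delta_\BQ<1$, or a low/high split using $(1+t-s)$ in place of $(t-s)$ at low frequencies. In both regions the outcome is a bounded quantity precisely when $\alpha\big(\delta_\BQ+\frac\mu2\big)>1+\frac\mu2$, that is, when $\alpha>\alpha_{\text{crit}}$ defined by \eqref{eq:StraussBoussinesq}. Lipschitz estimates for $u^{\nl}$ follow in the same way from \eqref{Eq:nonlinearity} and Hölder's inequality, $\||u|^{\alpha-1}|u-v|\|_{L^{p}}\le\|u\|_{L^q}^{\alpha-1}\|u-v\|_{L^q}$. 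Smallness of $\|u_1\|_{\mathcal A}$ then gives a contraction on a small ball of $X$, and hence the global solution together with \eqref{eq:decayNIBQ}.

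The main obstacle is the linear dispersive estimate for $E_1(t,s)$ uniformly in $0\le s\le t$. At low frequencies the weight $(1+s)^{\mu/2}$ and the derivative $\Delta$ must be traded against the oscillation of the Bessel functions. In the region $(1+s)w(\xi)\lesssim1$ the Bessel asymptotics fail and the singular behaviour $\xi$-powers must be handled through the vanishing condition. We would treat this with a three-zone decomposition: $(1+t)w\le1$; $(1+s)w\le1\le(1+t)w$; and $(1+s)w\ge1$. In each zone we apply a Littman-type stationary phase bound, assumed from the earlier propositions, to the oscillatory parts $e^{\pm i(t-s)w}$, and Young's inequality to the non-oscillatory parts.
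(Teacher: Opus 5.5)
Your architecture matches the paper's: the same weighted space $X$, the conjugate pair $p=1+\frac1\alpha$, $q=1+\alpha$ chosen so that $\|f(u)\|_{L^p}=\|u\|_{L^q}^\alpha$, the same zone decomposition, and a contraction for small data. But the step that feeds the linear estimate into the Duhamel term does not close as you wrote it.

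\textbf{Derivative bookkeeping.} Your expected estimate absorbs only one $|D|$ of the $\Delta$. It leaves $\||D|g\|_{L^p}$, i.e.\ $\||D|\,|u|^\alpha\|_{L^{1+1/\alpha}}$, on the right; these are your ``$\||D|^{-1}\Delta f\|$-type quantities''. The norm of $X$ contains no derivatives, so this is not controlled, and your later use of $\|f(u)\|_{L^p}$ alone is unjustified. The missing idea is that the propagator absorbs all of $\Delta$.
\begin{itemize}
\item At high frequencies the data-to-solution symbol carries $w(\xi)^{-1}\sim|\xi|^{-2}$, which takes both derivatives. This is Proposition~\ref{Prop1} with $\nu=\nu_0=0$; its requirement $n(\frac1p-\frac1q)+\nu-\nu_0\le nd$ holds with equality on the conjugate line.
\item At low frequencies $w(\xi)^{-1}\sim|\xi|^{-1}$ takes one derivative, and the other is taken by choosing $\nu=1$. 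The low-frequency rate $\min\{\kappa_{\mathrm{low}},\frac13(\kappa_{\mathrm{low}}+(n-1)d),\frac{nd}2\}$ is nondecreasing in $\kappa_{\mathrm{low}}=nd+\nu$ and capped at $\frac{nd}2$, so the extra vanishing factor $|\xi|$ costs no decay.
\end{itemize}
The right-hand side then becomes $\||D|^{-2}\Delta g\|_{L^p}=\|g\|_{L^p}$ at low frequencies, and $\||D|^{-1}\langle D\rangle^{-1}\Delta g\|_{L^p}\lesssim\|g\|_{L^p}$ at high frequencies. Only then does H\"older with $\alpha p=q$ apply.

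\textbf{The decay rate is off by a factor of two.} Interpolating the $L^2$--$L^2$ bound with the $L^1$--$L^\infty$ bound $t^{-n/2}$ at $\frac1p=\frac{\alpha}{\alpha+1}$ gives $(t-s)^{-nd/2}=(t-s)^{-\delta_{\mathrm{BQ}}}$, not $(t-s)^{-2\delta_{\mathrm{BQ}}}$. Your own display already has the correct exponent $n(\frac1p-\frac12)=\delta_{\mathrm{BQ}}$. The error matters in both regions of your integral.
\begin{itemize}
\item Near $s=t$, your kernel would require $nd=2\delta_{\mathrm{BQ}}<1$. The singularity comes from high frequencies, so switching to $(1+t-s)$ at low frequencies does not remove it; for $n=3$ you would only reach $\alpha<2$ instead of $\alpha<5$. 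With the correct rate the singularity is $(t-s)^{-nd/2}$, integrable exactly when $nd<2$, i.e.\ $\alpha<1+\frac{4}{(n-2)_+}$.
\item On $[0,t/2]$, your kernel would leave a spurious factor $(1+t)^{-\delta_{\mathrm{BQ}}}$ and hence a threshold below $\alpha_{\mathrm{crit}}$. With the correct rate, $(1+t-s)^{-\delta_{\mathrm{BQ}}}$ exactly cancels the weight $(1+t)^{\delta_{\mathrm{BQ}}}$. Boundedness then holds if and only if $\frac\mu2-\alpha(\delta_{\mathrm{BQ}}+\frac\mu2)<-1$, which is precisely how \eqref{eq:StraussBoussinesq} arises via Lemma~\ref{Lem:Auxiliary-GlobalExistence}.
\end{itemize}
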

\begin{remark}
In the case $\mu=0$ in \eqref{eq:StraussBoussinesq}, we find the classical Strauss exponent $\gamma(n)$ and our result corresponds to the result obtained in~\cite[Theorem 2]{Cho-Ozawa=2007}.
\end{remark}
\begin{remark}
In the present paper, we investigated the weakly damped regime $0<\mu<1$ in \eqref{Eq:Boussinesq}, where the asymptotic behavior of solutions remains essentially hyperbolic. In this case, the competition between dispersion and weak dissipation leads to a Strauss-type critical exponent for the Boussinesq equation in Theorem \ref{thm:NBQ}.

On the other hand, for larger damping parameters $\mu>1$ in \eqref{Eq:Boussinesq}, one may expect a qualitatively different behavior. Indeed, the damping term is sufficiently strong to produce diffusion-like effects in the low-frequency region, similarly to what occurs for damped wave and beam equations with effective dissipation. In view of this phenomenon, it is natural to conjecture that the critical exponent associated with \eqref{Eq:Boussinesq} for $\mu>1$ and the nonlinearity $|u|^\alpha$ with $\alpha>1$ should be of Fujita type. This expectation is also consistent with the decay structure observed for weakly damped beam equations in \cite{Ikehata-Soga=2015, Takeda-Yoshikawa=2012, Takeda-Yoshikawa=2013}, where the low-frequency part exhibits heat-like behavior and stronger dissipative effects dominate the long-time behavior.
\end{remark}
\section{Dispersive estimates}
The corresponding $s-$parameter linear model to \eqref{Eq:SemilinearProblem} with vanishing right-hand side is given by
\begin{equation}\label{Eq:LinearProblem}
\begin{cases}
u_{tt} + L_{w^2}u + \dfrac{\mu}{1+t}u_t = 0, & t\geq s\geq 0,\,\,\, x\in\mathbb{R}^n,\\
u(s,x) = 0, \,\,\,\, u_t(s,x) = u_1(s,x), & x\in\mathbb{R}^n.
\end{cases}
\end{equation}
We apply the partial Fourier transformation $\hat{u}(t,\xi)=\mathcal{F}_{x\to\xi}(u)(t,\xi)$ with respect to spatial variables and obtain
\begin{equation}\label{Eq:LinearProblem-Fourier}
\begin{cases}
\widehat{u}_{tt} + w(\xi)^2\widehat{u} + \dfrac{\mu}{1+t}\widehat{u}_t = 0 & t\geq s\geq 0,\,\,\, \xi\in\mathbb{R}^n,\\
\widehat{u}(s,\xi) = 0, \,\,\,\, \widehat{u}_t(s,\xi) = \widehat{u}_1(s,\xi), &  \xi\in\mathbb{R}^n.
\end{cases}
\end{equation}
Introducing the new variable $\tau:=(1+t)w(\xi)$ and making the ansatz $\widehat{u}(t,\xi)=v(\tau)$, the Cauchy problem \eqref{Eq:LinearProblem-Fourier} reduces to the following Cauchy problem for second order ODE:
\begin{equation} \label{Eq:Bessel-type}
\begin{cases}
v_{\tau\tau} + v + \dfrac{\mu}{\tau}v_\tau = 0, & t\geq s\geq 0,\,\,\, \xi\in\mathbb{R}^n,\\
v((1+s)w(\xi)) = 0, \,\,\,\, v_\tau((1+s)w(\xi)) = \dfrac{\hat{u}_1(s,\xi)}{w(\xi)}, & \xi\in\mathbb{R}^n.
\end{cases}
\end{equation}
According to \cite{Wirth=2004}, the solution to the transformed equation \eqref{Eq:Bessel-type} can be expressed in terms of Hankel functions $H^{\pm}_{\rho}$. This representation captures the oscillatory and decaying behavior of solutions at both low and high frequencies and is crucial for deriving sharp pointwise estimates in the Fourier space (see Lemma \ref{hankel}).
\begin{proposition} \label{Prop-Hankel}
Assume that $u$ solves the Cauchy problem \eqref{Eq:LinearProblem}. Then, the Fourier transform $\hat{u}(t, s,\xi)$ can be represented as
\[ \widehat{u}(t,s,\xi)= \psi(t,s,\xi) \widehat{u}_1(s,\xi), \]
where the multiplier $\psi=\psi(t,s,\xi)$ satisfies
\begin{equation*}\label{Eq:Hankel}
\psi(t,s,\xi)=\frac{\pi}{4i}\frac{(1+t)^{\rho}}{(1+s)^{\rho-1}}
\left|
\begin{array}{cc}
H^-_{\rho}\left( (1+s)w(\xi) \right) & H^-_{\rho}\left( (1+t)w(\xi) \right) \\
H^+_{\rho}\left( (1+s)w(\xi) \right) & H^+_{\rho}\left((1+t)w(\xi) \right) \\
\end{array} \right|,
\end{equation*}
with
\[ \rho= \frac{1-\mu}{2}. \]
\end{proposition}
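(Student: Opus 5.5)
To prove Proposition~\ref{Prop-Hankel}, I would reduce the Bessel-type equation \eqref{Eq:Bessel-type} to Bessel's equation and then solve the initial value problem by variation of constants, using the Wronskian of the Hankel functions. Fix $\xi\neq 0$; the set $\{w(\xi)=0\}$ has measure zero and can be ignored. Make the ansatz $v(\tau)=\tau^{\rho}y(\tau)$ with $\rho=\frac{1-\mu}{2}$. A direct computation gives
\[ v_{\tau\tau}+\frac{\mu}{\tau}v_\tau+v=\tau^{\rho}\Big(y''+\frac{2\rho+\mu}{\tau}y'+\Big(1+\frac{\rho(\rho-1)+\mu\rho}{\tau^2}\Big)y\Big). \]
With $2\rho+\mu=1$ and $\rho(\rho-1)+\mu\rho=\rho(\rho-1+\mu)=-\rho^2$, this becomes Bessel's equation of order $\rho$:
\[ y''+\frac1\tau y'+\Big(1-\frac{\rho^2}{\tau^2}\Big)y=0. \]
Hence a fundamental system for \eqref{Eq:Bessel-type} is $\tau^\rho H^\pm_\rho(\tau)$, where $H^{\pm}_\rho=J_\rho\pm iY_\rho$ are the Hankel functions.

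Next I would build the solution vanishing at $\tau_0:=(1+s)w(\xi)$. Set
\[ v(\tau)=C\,\tau^\rho\big(H^-_\rho(\tau_0)H^+_\rho(\tau)-H^+_\rho(\tau_0)H^-_\rho(\tau)\big), \]
which vanishes at $\tau=\tau_0$ by construction. Differentiating at $\tau_0$, the term coming from $(\tau^\rho)'$ is multiplied by the vanishing bracket, so
\[ v_\tau(\tau_0)=C\tau_0^\rho\,\mathcal W[H^-_\rho,H^+_\rho](\tau_0). \]
I would then use the classical Wronskian $\mathcal W[J_\rho,Y_\rho](\tau)=\frac{2}{\pi\tau}$, which gives
\[ \mathcal W[H^-_\rho,H^+_\rho]=\mathcal W[J-iY,\,J+iY]=2i\,\mathcal W[J,Y]=\frac{4i}{\pi\tau}. \]
Therefore $v_\tau(\tau_0)=C\tau_0^{\rho-1}\frac{4i}{\pi}$. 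Matching this with $v_\tau(\tau_0)=\hat u_1/w(\xi)$ gives
\[ C=\frac{\pi}{4i}\,\tau_0^{1-\rho}\,\frac{\hat u_1}{w(\xi)}. \]

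Finally I would substitute $\tau=(1+t)w(\xi)$ and $\tau_0=(1+s)w(\xi)$. The powers of $w$ combine as $w^{\rho}\cdot w^{1-\rho}\cdot w^{-1}=1$, and the time factors become $\frac{(1+t)^\rho}{(1+s)^{\rho-1}}$. The bracket $H^-_\rho(\tau_0)H^+_\rho(\tau)-H^+_\rho(\tau_0)H^-_\rho(\tau)$ is exactly the stated determinant, so $\hat u=\psi\,\hat u_1$ with $\psi$ as claimed. Uniqueness for the linear ODE (regular for $\tau\ge\tau_0>0$) guarantees this is the solution.

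The only delicate point is tracking the signs and normalization constants, namely the sign convention of the determinant and the Wronskian constant $\frac{4i}{\pi\tau}$. I would double-check these by letting $t\to s$, where $\psi\approx (t-s)$ must hold since $u(s)=0$ and $u_t(s)=u_1$.
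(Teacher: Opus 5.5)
Your derivation is correct. The substitution $v=\tau^{\rho}y$ reduces \eqref{Eq:Bessel-type} to Bessel's equation of order $\rho$, and with the Wronskian $\mathcal W[H^-_\rho,H^+_\rho](\tau)=\frac{4i}{\pi\tau}$ your normalization reproduces the stated $\psi$ exactly, including $\partial_t\psi|_{t=s}=1$. The paper does not prove the proposition but takes the Hankel-function representation from Wirth (2004); your variation-of-constants computation is the standard derivation behind that citation, so it supplies what the paper only cites.
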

Let us introduce $m=m(t,s,\xi)$ as follows:
\[ m(t,s,\xi) :=
\left|
\begin{array}{cc}
H^-_{\rho}\left( (1+s)w(\xi) \right) & H^-_{\rho}\left( (1+t)w(\xi) \right) \\
H^+_{\rho}\left( (1+s)w(\xi) \right) & H^+_{\rho}\left((1+t)w(\xi) \right) \\
\end{array} \right|. \]
Moreover, we recall that for $\rho\notin \Z$
\begin{equation} \label{Eq:Hankel-property}
H_{\rho}^{+}(\tau)= (i\sin(\rho\pi))^{-1}\big( J_{-\rho}(\tau)-J_{\rho}(\tau)e^{-i\rho \pi} \big) \quad \text{and} \quad  H_{\rho}^{-}(\tau)= (i\sin(\rho\pi))^{-1}\big(e^{i\rho \pi}  J_{\rho}(\tau)-J_{-\rho}(\tau) \big).
\end{equation}

We divide the extended phase space $[0,\infty)\times\mathbb{R}^n_\xi$ into following zones:
\begin{align*}
Z_{\text{high}} &= \{ (t,\xi)\in[0,\infty)\times\mathbb{R}^n_\xi : |\xi|\geq 1 \}, \\
Z_1 &= \big\{ (t,\xi)\in[0,\infty)\times\mathbb{R}^n_\xi : 1\leq (1+s)w(\xi) \big\}\cap\{ (t,\xi)\in[0,\infty)\times\mathbb{R}^n_\xi : |\xi|\leq 1 \}, \\
Z_2 &= \big\{ (t,\xi)\in[0,\infty)\times\mathbb{R}^n_\xi : (1+s)w(\xi)\leq 1\leq (1+t)w(\xi)\big\},\\
Z_3 &= \big\{ (t,\xi)\in[0,\infty)\times\mathbb{R}^n_\xi : (1+t)w(\xi)\leq 1 \big\}.
\end{align*}
\begin{figure}[H]
\begin{center}
\begin{tikzpicture}[>=latex,xscale=1.1]
	\draw[->] (0,0) -- (4,0)node[below]{$|\xi|$};
	\draw[->] (0,0) -- (0,4)node[left]{$t$};
    \node[below left] at(0,0){$0$};
	\draw[domain=0:3.8,color=red,variable=\t] plot ({3*exp(-\t/2.7)},\t);
	\node[color=black] at (2.5, 1.4){{\footnotesize $Z_1$}};
    \node[color=black] at (3.6, 1.4){{\footnotesize $Z_{\text{high}}$}};
	\node[color=black] at (1.3,1.7){{\footnotesize $Z_2$}};
	\draw[domain=0:3.8,color=blue,variable=\t] plot ({3*exp(-\t/1.3)},\t);
    \node[below] at(3,0){$1$};
    \draw[dashed, color=green] (3, 3.8) -- (3, 0);
	\node[color=black] at (.8,.4){{\footnotesize $Z_3$}};
\end{tikzpicture}
\caption{\small Division of extended phase space into zones}
\label{fig.zone}
\end{center}
\end{figure}
More precisely, we will consider a bump function $\chi\in\mathcal{C}_{0}^{\infty}([0, \infty))$ such that $\chi\equiv 1$ on [0,1], $\supp \chi \subset[0,2]$ and $\chi^{\prime} \leq 0$. Then, the functions $\chi_{1}, \chi_{2}$ and $\chi_{3}$ are defined by
\begin{align*}
\chi_{1}(s, \xi) &= 1-\chi\left((1+s)w(\xi)\right), \\
\chi_{2}(t, s, \xi) &= \chi\left((1+s)w(\xi)\right)\left(1-\chi\left((1+t)w(\xi)\right)\right),\\
\chi_{3}(t, s, \xi) &= \chi\left((1+s)w(\xi)\right) \chi\left((1+t)w(\xi)\right),
\end{align*}
and they satisfy the condition $\chi_{1}+\chi_{2}+\chi_{3} \equiv 1$ on the whole extended phase space.

To obtain long time decay estimates for the solution to \eqref{Eq:LinearProblem}, we will use Mikhlin-H\"{o}rmander multiplier theorem in the following form.
\begin{lemma} [Mikhlin-H\"{o}rmander multiplier theorem, Theorem 2.5 in \cite{Hormander=1960}] \label{Thm:Mikhlin-Hormander}
Let $1<p<\infty$ and $k=[n/2]+1$. Assume that $h\in\mathcal{C}^k(\mathbb{R}^n\textbackslash\{0\})$ and
\[ |\partial_\xi^\gamma h(\xi)| \leq C|\xi|^{-|\gamma|}, \qquad |\gamma|\leq k. \]
Then, $h\in M_p^p$.
\end{lemma}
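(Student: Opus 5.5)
The plan is to deduce the lemma from Calderón--Zygmund theory. Boundedness on $L^2$ is immediate: the case $\gamma=0$ of the hypothesis gives $\|h\|_{L^\infty}\leq C$, so Plancherel yields $h\in M_2^2$. It then suffices to prove that the convolution kernel $K=\mathcal{F}^{-1}h$ satisfies the Hörmander integral condition
\[ \sup_{y\neq0}\int_{|x|>2|y|}|K(x-y)-K(x)|\,dx\leq C. \]
With this in hand, the Calderón--Zygmund decomposition gives the weak type $(1,1)$ bound. Marcinkiewicz interpolation with the $L^2$ bound then gives $h\in M_p^p$ for $1<p\leq2$. For $2\leq p<\infty$ we argue by duality: the adjoint operator has symbol $\overline{h(\xi)}$, which satisfies the same derivative bounds, so the range $1<p'\leq2$ transfers to $2\leq p<\infty$.

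To make sense of $K$ and estimate it, we use a dyadic partition of unity. Fix $\varphi\in\mathcal{C}_0^\infty$ supported in $\{1/2\leq|\xi|\leq2\}$ with $\sum_{j\in\Z}\varphi(2^{-j}\xi)=1$ for $\xi\neq0$. Set $h_j(\xi)=h(\xi)\varphi(2^{-j}\xi)$ and $K_j=\mathcal{F}^{-1}h_j$. By the Leibniz rule and the hypothesis, $|\partial_\xi^\gamma h_j|\leq C2^{-j|\gamma|}$ for $|\gamma|\leq k$, and each $h_j$ is supported in a shell of volume $\sim2^{jn}$. Plancherel applied to $x^\beta K_j$ and to $x^\beta\nabla K_j$ then gives the weighted $L^2$ bounds
\[ \int|x|^{2a}|K_j(x)|^2dx\lesssim 2^{j(n-2a)},\qquad \int|x|^{2a}|\nabla K_j(x)|^2dx\lesssim 2^{j(n+2-2a)},\qquad a\in\{0,k\}. \]
Cauchy--Schwarz turns these into $L^1$ bounds, using $2k>n$ (this is exactly where $k=[n/2]+1$ is needed). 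First,
\[ \int_{|x|>R}|K_j|\,dx\lesssim\Big(\int_{|x|>R}|x|^{-2k}dx\Big)^{1/2}2^{j(n/2-k)}\lesssim(2^jR)^{n/2-k}. \]
Second, splitting $\mathbb{R}^n$ into $|x|\leq2^{-j}$ and $|x|>2^{-j}$ gives $\|\nabla K_j\|_{L^1}\lesssim2^j$.

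The main obstacle is summing these bounds over $j$ inside the Hörmander condition. For fixed $y$ we split the sum at $2^j|y|\approx1$. For $2^j|y|\leq1$ we use the mean value theorem together with the gradient bound, so the $j$-th term is $\lesssim|y|2^j$, and these form a geometric series summing to $O(1)$. For $2^j|y|>1$ we estimate the two pieces separately. The region $|x|>2|y|$ implies $|x-y|>|y|$, so the tail bound gives a term $\lesssim(2^j|y|)^{n/2-k}$. Since $n/2-k<0$, this series also converges uniformly in $y$. Finally, $K=\sum_jK_j$ converges in $\mathcal{S}'$ (the partial sums of $h_j$ converge boundedly to $h$). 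Hence the bound holds for $K$ away from the origin, and the Calderón--Zygmund argument applies, first on Schwartz functions and then by density. This completes the plan.
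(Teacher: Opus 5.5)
Your proof is correct. Note, however, that the paper gives no argument for this lemma: it quotes it directly from H\"ormander's 1960 paper. Your proposal supplies the standard Calder\'on--Zygmund proof behind that result, which is in substance H\"ormander's own route: dyadic pieces $K_j$, weighted Plancherel bounds, Cauchy--Schwarz using $2k>n$, the H\"ormander integral condition, weak type $(1,1)$, Marcinkiewicz interpolation, and duality.

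Writing the argument out yourself gains two things. First, you only use $h\in L^\infty$ together with the shell averages $\int_{R<|\xi|<2R}|\partial_\xi^\gamma h|^2\,d\xi\lesssim R^{n-2|\gamma|}$ for $|\gamma|\le k$. So you in fact prove the stronger $L^2$-averaged form of the theorem. Second, every constant in your chain is linear in the constant $C$ of the hypothesis, so you obtain $\|h\|_{M_p^p}\le C_{n,p}\,C$. The paper needs exactly this quantitative form, though it never says so. It applies the lemma to the families in Lemma~\ref{lemma:m1-for-Plate-BQ} and to $1-\chi((1+s)w(\xi))$, and claims multiplier bounds uniform in $t$ or $s$. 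The lemma as stated only asserts $h\in M_p^p$, which does not give that uniformity.

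One small point to tighten. To regard $K$ as a function on $\mathbb{R}^n\setminus\{0\}$, you need $\sum_jK_j$ to converge in $L^1_{\mathrm{loc}}(\mathbb{R}^n\setminus\{0\})$. Your tail bound $(2^jR)^{n/2-k}$ handles $j\to+\infty$. For $j\to-\infty$, however, your estimates only give $\|K_j\|_{L^1}\lesssim 1$, which is not summable. There you should use $\|K_j\|_{L^\infty}\lesssim\|h_j\|_{L^1}\lesssim C\,2^{jn}$, which gives locally uniform convergence. Alternatively, run the Calder\'on--Zygmund argument on the truncated kernels $\sum_{|j|\le N}K_j$, whose constants are uniform in $N$, and pass to the limit.
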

Let $g\in\mathcal{C}^\infty(\mathbb{R}^n\textbackslash\{0\})$ and
\begin{equation}\label{eq:gH}
|\partial_\xi^\gamma g(\xi)|\leq C\,|\xi|^{\eta-|\gamma|},
\end{equation}
for some $\eta\in\R$ (in particular, \eqref{eq:gH} holds if $g$ is homogeneous of degree $\eta$).
As a consequence of Lemma \ref{Thm:Mikhlin-Hormander} we have the following.
\begin{lemma}\label{lemma:m1-for-Plate-BQ}
Let $a\in S^{-\frac12}(\mathbb{R}^n)$, that is, $a\in \mathcal{C}^\infty(\mathbb{R}^n)$ and satisfy
\[ |\partial_\xi^\gamma a(\xi)|\leq C\,\langle \xi\rangle^{-\frac12-|\gamma|}, \quad \gamma\in \mathbb{N}^n. \]
Let us define
\[ h(t, \xi):=((1+t)g(\xi))^{\frac12}a((1+t)g(\xi)), \qquad t\ge0,  \]
with $g\in\mathcal{C}^\infty(\mathbb{R}^n\textbackslash\{0\})$ and satisfy \eqref{eq:gH}. Then $h(t, \cdot)\in M_p^p$ for every $p\in(1,\infty)$, and there exists a constant $C>0$, independent of $t\ge0$, such that $\|h(t, \cdot)\|_{M_p^p}\leq C$.
\end{lemma}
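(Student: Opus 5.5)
We apply Lemma \ref{Thm:Mikhlin-Hormander} to $h(t,\cdot)$ for each fixed $t\ge 0$. The task is to show that $h(t,\cdot)\in\mathcal{C}^k(\mathbb{R}^n\setminus\{0\})$ and that
\[ |\partial_\xi^\gamma h(t,\xi)|\le C|\xi|^{-|\gamma|},\qquad |\gamma|\le k=[n/2]+1, \]
with $C$ independent of $t$. We write $h(t,\xi)=b\big((1+t)g(\xi)\big)$, where $b(\tau):=\tau^{1/2}a(\tau)$. Since the argument is real, $g$ is real valued, and we treat $\tau^{1/2}$ as defined on the range of $g$ (for instance $g\ge 0$, as for $g=w$). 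The first observation is that $b$ is a symbol of order $0$ in one variable on $(0,\infty)$, in the sense that $|b^{(j)}(\tau)|\le C_j\tau^{-j}$ for all $j$.

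To check this, use Leibniz:
\[ b^{(j)}(\tau)=\sum_{l\le j}c_{l}\,\tau^{1/2-l}a^{(j-l)}(\tau), \qquad |a^{(j-l)}(\tau)|\le C\langle\tau\rangle^{-1/2-(j-l)}. \]
For $\tau\ge 1$ each term is bounded by $\tau^{1/2-l}\tau^{-1/2-j+l}=\tau^{-j}$. For $0<\tau\le 1$ each term is bounded by $\tau^{1/2-l}\lesssim \tau^{-j}$. (Here $a$ is only needed along the positive real axis; I read $a(\tau)$ as the restriction of the symbol, or as $a$ evaluated at a one-dimensional argument.) The gain at $\tau\to 0$ even gives $|b^{(j)}(\tau)|\lesssim \tau^{1/2-j}$, but $\tau^{-j}$ is all we need.

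Next apply the Faà di Bruno formula to $\partial_\xi^\gamma\, b\big((1+t)g(\xi)\big)$. It is a finite sum of terms of the form
\[ b^{(j)}\big((1+t)g(\xi)\big)\,(1+t)^j\prod_{i=1}^{j}\partial_\xi^{\gamma_i}g(\xi),\qquad \gamma_1+\dots+\gamma_j=\gamma,\ |\gamma_i|\ge1,\ 1\le j\le|\gamma|. \]
By \eqref{eq:gH} and the bound on $b^{(j)}$, each term is at most
\[ C\big((1+t)|g(\xi)|\big)^{-j}(1+t)^j\,|\xi|^{j\eta-|\gamma|}. \]
This is bounded by $C|\xi|^{-|\gamma|}$, uniformly in $t$, provided $|g(\xi)|\gtrsim|\xi|^{\eta}$, that is, provided $g$ is elliptic of order $\eta$. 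The factors $(1+t)^{j}$ cancel exactly, which is what makes the constant independent of $t$. The term $\gamma=0$ is simply $\sup_\tau|b(\tau)|<\infty$.

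The main obstacle is that the hypothesis \eqref{eq:gH} alone gives only upper bounds on $g$, while the step above needs the lower bound $|g(\xi)|\gtrsim|\xi|^\eta$. In the intended applications this holds: $g$ is homogeneous of degree $\eta$ and nonvanishing off the origin, for instance $g=|\xi|^\sigma$. The Boussinesq symbol $w=|\xi|\sqrt{1+|\xi|^2}$ is not homogeneous, but it satisfies \eqref{eq:gH} with $\eta=1$ on $|\xi|\le1$ and $\eta=2$ on $|\xi|\ge 1$, with matching lower bounds. I would therefore make this ellipticity explicit in the statement (or treat it as implicit in ``$g$ homogeneous of degree $\eta$''). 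With it, the scaling identity $(1+t)|g|\cdot\frac{|\partial g|}{|g|}$ makes each derivative produce exactly $|\xi|^{-1}$ per order. This gives the Mikhlin bounds uniformly in $t$, and Lemma \ref{Thm:Mikhlin-Hormander} then yields $\|h(t,\cdot)\|_{M_p^p}\le C$ for all $p\in(1,\infty)$.
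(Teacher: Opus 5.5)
Your proposal is correct and follows essentially the same route as the paper: both write $h(t,\xi)=F((1+t)g(\xi))$ with $F(r)=r^{1/2}a(r)$ satisfying $|F^{(k)}(r)|\le C_k r^{-k}$, expand by the chain rule (Fa\`a di Bruno), and use the cancellation of the factors $(1+t)^k$ to get Mikhlin bounds uniform in $t$. Your remark about the lower bound $|g(\xi)|\gtrsim|\xi|^{\eta}$ is well taken: the paper uses it without stating it in the step $((1+t)g(\xi))^{-k}(1+t)^k|\xi|^{k\eta-|\alpha|}\le C_\alpha|\xi|^{-|\alpha|}$, and it does hold for the symbols $|\xi|^\sigma$ and $|\xi|\sqrt{1+|\xi|^2}$ to which the lemma is applied.
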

\begin{proof}
We prove that $h$ satisfies the Mikhlin--H\"ormander condition uniformly with respect to $t\ge0$, namely,
\[ |\partial_\xi^\alpha h(t, \xi)|\le C_\alpha |\xi|^{-|\alpha|}, \qquad \alpha\in\mathbb N^n. \]
Hence, the multiplier estimate follows.

If the   multiplier $g$ is homogeneous, then
$\|h(t, \cdot)\|_{M_p^p}=\|h(0, \cdot)\|_{M_p^p}$ and we may reduce the problem to time-independent multiplier.

In the following, our analysis include also  non-homogeneous multipliers. Set
\[ F(r):=r^{\frac12}a(r), \qquad r>0. \]
Since $a\in S^{-\frac12}$, one has
\[ |F^{(k)}(r)|\le C_k r^{-k}, \qquad k\in\mathbb N_0. \]
Moreover,
\[ h(t, \xi)=F((1+t)g(\xi)). \]
By the chain rule, for every multi-index $\alpha\neq0$,
\[ \partial_\xi^\alpha h(t, \xi) = \sum_{k=1}^{|\alpha|} F^{(k)}((1+t)g(\xi)) (1+t)^k
\sum_{\substack{\beta_1+\cdots+\beta_k=\alpha\\ |\beta_j|\ge1}}
C_{\alpha,\beta} \prod_{j=1}^k \partial_\xi^{\beta_j}g(\xi). \]
Since
\[ |\partial_\xi^\gamma g(\xi)|\le C_\gamma |\xi|^{\eta-|\gamma|}, \]
we obtain
\begin{align*}
|\partial_\xi^\alpha h(t, \xi)|
&\leq C \sum_{k=1}^{|\alpha|} ((1+t)g(\xi))^{-k}(1+t)^k |\xi|^{k\eta-|\alpha|} \leq C_\alpha |\xi|^{-|\alpha|}.
\end{align*}
Hence, $ h(t, \xi)$ satisfies the Mikhlin--H\"ormander condition, and therefore $\| h(t, \cdot)\|_{M_p^p}\leq C$, $p\in(1,\infty)$ with $C$ independent of $t$. 
\end{proof}

\begin{remark}
The conclusion of Lemma \ref{lemma:m1-for-Plate-BQ} also holds in the case that the constant $\eta$ change at low and high frequencies (see  for instance \cite{M80}), namely, if $g$ is smooth on $\mathbb R^n\setminus\{0\}$, and satisfies
\begin{align*}
|\partial_\xi^\gamma g(\xi)|\le C_\gamma
\begin{cases}
|\xi|^{\eta_1-|\gamma|}, & |\xi|\leq 1,\\
|\xi|^{\eta_2-|\gamma|}, & |\xi|\geq 1.
\end{cases}
\end{align*}
\end{remark}

To quantify the influence of dispersion on $L^p$–$L^q$ estimates for $1\leq p\leq 2\leq q\leq \infty$, we define
\begin{equation}\label{eq:dH}
d=d(p,q):=\min\Big\{ \frac{2}{p}-1,\;1-\frac{2}{q} \Big\}.
\end{equation}
In particular, one has $d(p,q)\leq \frac{1}{p}-\frac{1}{q}$, with equality if and only if $q=p/(p-1)$, that is, when $(p,q)$ lies on the conjugate line. Moreover, let
\begin{equation}\label{eq:kappaH}
\kappa=n\Big(\frac1p-\frac1q\Big)+\eta.
\end{equation}
\subsection{Plate-type equation}\label{sec:linear-plate}
In this section, we consider the following Cauchy problem for the linear equation:
\begin{equation}\label{eq:Homogeneous}
\begin{cases}
u_{tt}+ (-\Delta)^\sigma u+ \dfrac{\mu}{1+t} u_t =0, & t\geq s\geq 0,\,\,\, x\in\mathbb{R}^n,\\
u(s,x)=0, \quad u_t(s,x)=u_1(s, x), & x\in\mathbb{R}^n.
\end{cases}
\end{equation}
Here, $\sigma>0$, $\sigma\neq1$, and $(-\Delta)^\sigma$ denotes the fractional Laplacian defined via the Fourier transform by
\[ \mathcal{F}\left((-\Delta)^\sigma u\right)(\xi) = |\xi|^{2\sigma}\hat{u}(\xi). \]
\begin{remark}[General framework]
More generally (see for instance \cite{DEL=2026}), one may consider operators $L_{w^2}$ with symbol $w(\xi)^2$, where the phase function $w$ is real-valued, positive for $\xi\neq0$, homogeneous of degree $\sigma>0$, $\sigma\neq1$, and sufficiently smooth on $\mathbb{R}^n\setminus\{0\}$ (for instance, $w\in\mathcal{C}^{n+3}(\mathbb{R}^n\setminus\{0\})$).

In this general setting, we define
\[ \mathrm{r}:=\min_{\xi\in S^{n-1}}\operatorname{rank} H_w(\xi), \]
where $H_w(\xi)$ denotes the Hessian matrix of $w$. If $w$ is homogeneous of degree $\sigma$, then:
\begin{itemize}
\item $\mathrm{r}\leq n-1$ if $\sigma=1$,
\item $\mathrm{r}=n$ may occur if $\sigma\neq1$.
\end{itemize}
In particular, for the fractional Laplacian $w(\xi)=|\xi|^\sigma$, corresponding to $L_{w^2}=(-\Delta)^\sigma$, one has
\[
\mathrm{r}=
\begin{cases}
n, & \text{if}\,\,\, \sigma\neq1,\\
n-1, & \text{if}\,\,\, \sigma=1.
\end{cases}
\]
\end{remark}
%
We preliminarily notice that, thanks to the homogeneity assumption of $w(\xi) =\xii^\sigma$, letting $\xi'=\xi/\xii$, we have the following:
\begin{align*}
|\partial_\xi^\gamma w(\xi)| & = \xii^{\sigma-|\gamma|} |\partial_\xi^\gamma w(\xi')| \leq C_\gamma\,\xii^{\sigma-|\gamma|},\\
w(\xi) & =\xii^\sigma\,w(\xi')\geq c\xii^\sigma, \qquad c=\min_{|\xi|=1} w(\xi)>0.
\end{align*}
Moreover, letting $\eta=(1+s)^{\frac1{\sigma}}\xi$, by scaling property of  multipliers we have
\begin{equation}\label{simple}
\| m(t, s, \cdot) \|_{M_p^q} \leq (1+s)^{-\frac{n}{\sigma}\left(\frac1p-\frac1q\right)} \big\| m\left(\frac{t-s}{1+s}, 0, \cdot\right) \big\|_{M_p^q}.
\end{equation}
Hence, we may simplify our analysis in the extended phase space by splitting the study of the multiplier into low- and high-frequency regimes. Moreover, in the low-frequency regime, we further decompose the analysis as follows:
\begin{equation}\label{simple2}
\chi(w(\xi))m\left(\frac{t-s}{1+s}, 0, \cdot\right) = \chi(w(\xi))\left( 1-\chi\big(\frac{1+t}{1+s}|\xi|^{\sigma}\big)+ \chi\big(\frac{1+t}{1+s}|\xi|^{\sigma}\big) \right) m\left(\frac{t-s}{1+s}, 0, \cdot\right).
\end{equation}
Now we may state the main result of this section.
\begin{proposition}\label{Prop1H}
Let $\mu\in [0,1)$,  $\sigma>1$, $1< p\leq 2\leq q <\infty$ and assume
\begin{equation}\label{regularity}
\frac{n}{\sigma}\Big( \frac1p-\frac1q \Big)- \frac{nd}{2} \leq 1-\frac{\mu}{2}.
\end{equation}
Then, the solution to \eqref{eq:Homogeneous}, with $w(\xi)=|\xi|^{\sigma}$, satisfies
\begin{equation*}
\|u(t, s, \cdot)\|_{L^q} \lesssim  (1+t)^{1-\mu-\frac{n}{\sigma}\left(\frac1p-\frac1q\right)} (1+s)^{\mu} \|u_1\|_{L^p\cap L^2}, \quad t\geq s.
\end{equation*}
\end{proposition}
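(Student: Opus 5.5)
The plan is to write $u(t,s,\cdot)=\mathcal F^{-1}\big(\psi(t,s,\cdot)\widehat u_1(s,\cdot)\big)$ using Proposition \ref{Prop-Hankel}, and to estimate $\|\psi(t,s,\cdot)\|_{M_p^q}$ (plus an $L^2\to L^q$ piece at high frequencies) after splitting $\psi$ with $\chi_1,\chi_2,\chi_3$. By the scaling identity \eqref{simple} and the decomposition \eqref{simple2}, I may take $s=0$ and replace $1+t$ by $T:=\frac{1+t}{1+s}\ge1$. The prefactor $\frac{(1+t)^\rho}{(1+s)^{\rho-1}}$ together with the factor $(1+s)^{-\frac n\sigma(\frac1p-\frac1q)}$ from \eqref{simple} produces exactly the weights $(1+t)^{\cdots}(1+s)^{\mu}$ at the end. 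Throughout, $\rho=\frac{1-\mu}2\in(0,\frac12]$ and $\rho+\frac12=1-\frac\mu2$.

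\emph{Zone $Z_3$ (both Hankel arguments $\lesssim1$).} I would use \eqref{Eq:Hankel-property} and the small-argument expansions $J_{\pm\rho}(\tau)=\tau^{\pm\rho}b_\pm(\tau^2)$ with smooth $b_\pm$. The leading terms of the determinant combine to give
\[
\psi\approx\frac{(1+s)^\mu\big((1+t)^{1-\mu}-(1+s)^{1-\mu}\big)}{1-\mu}
\]
times a smooth symbol in $(1+t)|\xi|^\sigma$. Hence $\psi\chi_3$ equals $(1+t)^{1-\mu}(1+s)^\mu$ times a symbol that is smooth, bounded, and supported in $|\xi|\lesssim(1+t)^{-1/\sigma}$, uniformly in the scaling. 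A Bernstein/Young argument then gives $\|\cdot\|_{M_p^q}\lesssim(1+t)^{-\frac n\sigma(\frac1p-\frac1q)}$, which is the claimed rate. (When $\mu=0$ the log-type case $\rho=\frac12$ is avoided because $\rho=\frac12\notin\mathbb Z$ still holds.)

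\emph{Zone $Z_2$ (mixed), the main step.} Here I use the small-argument behaviour $H^\pm_\rho((1+s)w)\sim((1+s)|\xi|^\sigma)^{-\rho}$ and the large-argument form $H^\pm_\rho(\tau)=e^{\pm i\tau}\tau^{-1/2}a_\pm(\tau)$ with $a_\pm\in S^0$. Writing $\tau^{-1/2}a_\pm=\tau^{\frac12}\tilde a_\pm$ with $\tilde a_\pm\in S^{-1}$ lets me apply Lemma \ref{lemma:m1-for-Plate-BQ} to remove the symbol factors uniformly in $t$. What remains is
\[
(1+t)^{\rho-\frac12}(1+s)^{1-2\rho}\,|\xi|^{-\sigma(\rho+\frac12)}\,e^{\pm i(1+t)|\xi|^\sigma}
\]
on $(1+t)^{-1/\sigma}\lesssim|\xi|\lesssim(1+s)^{-1/\sigma}$. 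I would decompose dyadically, $|\xi|\sim2^k$. On each piece I interpolate the $L^1\to L^\infty$ stationary-phase bound (the Hessian of $|\xi|^\sigma$ has full rank $n$ since $\sigma\ne1$), giving $((1+t)2^{k\sigma})^{-n/2}2^{kn}$, with the $L^2$ bound; this yields the factor $2^{kn(\frac1p-\frac1q)}((1+t)2^{k\sigma})^{-nd/2}$. The resulting power of $2^k$ is
\[
\sigma\Big[\tfrac n\sigma\big(\tfrac1p-\tfrac1q\big)-\tfrac{nd}2-\big(1-\tfrac\mu2\big)\Big]\le0
\]
exactly by \eqref{regularity}. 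So the dyadic sum is dominated by the lowest scale $2^k\sim(1+t)^{-1/\sigma}$, which gives $(1+t)^{1-\mu-\frac n\sigma(\frac1p-\frac1q)}(1+s)^\mu$. I expect this to be the main obstacle, since the equality case of \eqref{regularity} makes the dyadic sum borderline. There I would avoid a $\log$ loss by applying a Littlewood--Paley square-function argument (using $q\ge2$, $p\le2$ and Minkowski) instead of the triangle inequality.

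\emph{Zone $Z_1$/high frequencies.} Both arguments are large, so $\psi\approx(1+t)^{-\frac\mu2}(1+s)^{\frac\mu2}|\xi|^{-\sigma}e^{\pm i(t-s)|\xi|^\sigma}$ times $S^0$ symbols, again handled with Lemma \ref{lemma:m1-for-Plate-BQ}. For $|\xi|\ge(1+s)^{-1/\sigma}$ the weight $|\xi|^{-\sigma}$ gives smoothing. I would bound this piece from $L^2$ via Sobolev embedding combined with the same dispersive dyadic estimate with $t-s$ in place of $1+t$ when $t-s\gtrsim1+s$. When $t-s\lesssim1+s$ there is no decay to gain, and the required bound $(1+t)^{-\frac\mu2}\lesssim(1+t)^{1-\mu-\frac n\sigma(\cdots)}$ follows since $t\sim s$. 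The $L^p\cap L^2$ norm of $u_1$ absorbs the high-frequency regularity. Summing the three zones gives the proposition.
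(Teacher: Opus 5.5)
Your low-frequency zones are fine and differ from the paper only in packaging. Your dyadic/Littlewood--Paley argument in $Z_2$ replaces the paper's rescaling plus Lemma \ref{Lemma-DELH}(ii), and your $Z_3$ bound via the leading Bessel term and Young's inequality replaces the paper's pointwise bound plus Hausdorff--Young. The genuine gap is in the high-frequency step near the diagonal, and this, not the borderline case in $Z_2$, is the real obstacle.

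For $t-s\lesssim 1+s$ you assert that $(1+t)^{-\frac\mu2}\lesssim(1+t)^{1-\mu-\frac n\sigma(\frac1p-\frac1q)}$ ``follows since $t\sim s$''. As written this is the condition $\frac n\sigma(\frac1p-\frac1q)\le 1-\frac\mu2$. With the weights $(1+s)^{\mu/2}$ and $(1+s)^{\mu}$ restored it becomes $\frac n\sigma(\frac1p-\frac1q)\le 1$. Neither follows from \eqref{regularity}, which only controls $\frac n\sigma(\frac1p-\frac1q)-\frac{nd}2$. For example, $n=3$, $\sigma=2$, $(p,q)=(\frac87,8)$ gives $\frac n\sigma(\frac1p-\frac1q)=\frac98$, while the left-hand side of \eqref{regularity} vanishes.

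In your own dyadic language, the pieces $|\xi|\sim 2^k\ge(1+s)^{-1/\sigma}$ contribute about $2^{-k\sigma}2^{kn(\frac1p-\frac1q)}\min\{1,((t-s)2^{k\sigma})^{-nd/2}\}$. The lowest scale gives exactly the target $(1+s)^{1-\frac n\sigma(\frac1p-\frac1q)}$. But when $\frac n\sigma(\frac1p-\frac1q)>1$ the sum is dominated by $2^{k\sigma}\sim (t-s)^{-1}$, which for $t-s\sim1$ is of order one.

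A related point: for $t-s\gtrsim 1+s$ you must take all frequencies $|\xi|\ge(1+s)^{-1/\sigma}$ from $L^p$ with the dispersive factor, as the paper does. An $L^2$/Sobolev bound on $|\xi|\gtrsim1$ yields only $(1+t)^{-\mu/2}(1+s)^{\mu/2}$ there, since $d(2,q)=0$.

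This cannot be repaired, because the claimed estimate is false in that regime.
\begin{itemize}
\item For $\mu=0$ the equation is time-translation invariant, so $u(s+1,s,\cdot)=u(1,0,\cdot)$. Its $L^q$ norm is a fixed positive number for any nonzero $u_1$ with $\hat u_1$ supported in $\{1\le|\xi|\le 2\}$. Meanwhile the right-hand side behaves like $(1+s)^{1-\frac n\sigma(\frac1p-\frac1q)}\to0$.
\item For $\mu\in(0,1)$ the same happens as $s\to\infty$. The damping coefficient on $[s,s+1]$ tends to $0$, so $\psi(s+1,s,\xi)\to\frac{\sin|\xi|^\sigma}{|\xi|^\sigma}$ smoothly on that annulus.
\end{itemize}

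The paper's proof has exactly the same hole. Its high-frequency bound comes from Lemma \ref{Lemma-DELH}(ii) at time $\frac{t-s}{1+s}$. It then replaces $\big(\frac{t-s}{1+s}\big)^{-\frac{nd}2}$ by $\big(\frac{1+t}{1+s}\big)^{-\frac{nd}2}$, which is legitimate only for $t-s\gtrsim 1+s$. The remaining range is dismissed with the remark on $u_1\in L^2$, which removes the singularity at $t=s$ but produces no decay in $s$.

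What your argument, like the paper's, actually proves is the proposition under the additional hypothesis $\frac n\sigma(\frac1p-\frac1q)\le 1$. In that case $\kappa=n(\frac1p-\frac1q)-\sigma\le0$ in Lemma \ref{Lemma-DELH}(ii). The near-diagonal high-frequency piece is then bounded by $(1+s)^{1-\frac n\sigma(\frac1p-\frac1q)}$, which matches the target since $t\sim s$. Your step goes through once the $(1+s)$-weights are tracked correctly. Without that hypothesis, the right-hand side of the estimate has to be weakened near the diagonal $t\sim s$.
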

To prove Proposition \ref{Prop1H}, we use the following dispersive estimates for oscillatory multipliers (see Theorems 2 and 5 in \cite{DEL=2026}).
\begin{lemma} \label{Lemma-DELH}
Let $1< p\leq 2\leq q<\infty$, $w(\xi)=|\xi|^{\sigma}, \sigma>1$ and $\eta\in \mathbb{R}$. Let $d=d(p,q)$ and $\kappa$ as in~\eqref{eq:dH} and \eqref{eq:kappaH}, respectively.
\begin{enumerate}[(i)]
\item We assume that $\kappa \geq0$. Then
\begin{equation} \label{eq:estlow-plate}
\| |\xi|^{\eta}e^{itw(\xi)}\chi(\xi) \|_{M_p^q} \leq (1+t)^{-\min\left\{ \frac{\kappa}{\sigma}, \frac{nd}2  \right\}}, \quad t\geq 0.
\end{equation}
By replacing $e^{it\omega}$ by $e^{it\omega}-e^{-it\omega}$, \eqref{eq:estlow-plate} remains valid even if we weaken our assumption~$\kappa\geq0$ to $\kappa\geq -\sigma$.
\item Moreover,
\begin{equation}\label{eq:esthi0deg-plate}
\| |\xi|^{\eta}e^{itw(\xi)}(1-\chi)(\xi) \|_{M_p^q} \leq \begin{cases}
C\,t^{-\frac{nd}2}, & t\geq1, \\
C\,t^{-\frac{\kappa}{\sigma}},& t\in(0,1],
\end{cases}
\end{equation}
provided that $\kappa/\sigma\leq nd/2$. Estimate \eqref{eq:esthi0deg-plate} also holds if $\kappa<0$ replacing $C\,t^{-\frac{\kappa}{\sigma}}$ by $C$.
\end{enumerate}
\end{lemma}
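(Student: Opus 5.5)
The approach is a Littlewood--Paley decomposition combined with scaling, a stationary phase estimate on a fixed annulus, and a square-function summation. Throughout, $p>1$ and $q<\infty$, so Littlewood--Paley theory is available. Fix a dyadic partition $\sum_{k\in\Z}\varphi(2^{-k}\xi)=1$ on $\R^n\setminus\{0\}$, with $\varphi$ supported in $\{1/2\le|\xi|\le 2\}$. For $p\le 2\le q$ we use the two standard inequalities
\[ \|f\|_{L^q}\lesssim \Big(\sum_k\|\Delta_k f\|_{L^q}^2\Big)^{1/2},\qquad \Big(\sum_k\|\Delta_k g\|_{L^p}^2\Big)^{1/2}\lesssim\|g\|_{L^p}. \]
Together with a slightly fattened projector $\tilde\Delta_k$, they give
\[ \|m\|_{M_p^q}\lesssim \sup_k\|m\,\varphi(2^{-k}\cdot)\|_{M_p^q}. \]
This supremum bound is what avoids logarithmic losses, including in the borderline case $\kappa/\sigma=nd/2$. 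Since the cutoffs $\chi$ and $1-\chi$ are Fourier multipliers bounded on every $L^p$, it suffices to estimate the dyadic pieces with $k\lesssim 1$ for part (i) and with $k\gtrsim -1$ for part (ii).

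Next, each dyadic piece is reduced to the unit annulus by scaling. The substitution $\xi=2^k\zeta$ and the homogeneity $w(2^k\zeta)=2^{k\sigma}w(\zeta)$ give
\[ \big\||\xi|^\eta e^{itw(\xi)}\varphi(2^{-k}\xi)\big\|_{M_p^q}=2^{k\kappa}\,\big\||\zeta|^\eta e^{iT w(\zeta)}\varphi(\zeta)\big\|_{M_p^q},\qquad T=t2^{k\sigma}. \]
The core claim is that the annulus multiplier on the right has $M_p^q$ norm at most $C\min\{1,T^{-nd/2}\}$. It is here that $\sigma\ne 1$ enters: the Hessian of $|\zeta|^\sigma$ has full rank $n$ on the annulus, so stationary phase yields a uniform kernel bound $\|\mathcal F^{-1}(e^{iTw}|\zeta|^\eta\varphi)\|_{L^\infty}\lesssim (1+T)^{-n/2}$. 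Young's inequality then gives the $L^1\to L^\infty$ bound $(1+T)^{-n/2}$, Plancherel gives the $L^2\to L^2$ bound $O(1)$, and Riesz--Thorin interpolation gives $L^p\to L^{p'}$ with norm $(1+T)^{-\frac n2(\frac2p-1)}$. Because the symbol has compact support, Bernstein's inequality lets us pass from $L^{p'}$ to any $q\ge p'$ at no cost. When $q<p'$, we instead interpolate with the dual pair $q'\to q$ and use Bernstein on the input side. The better of these two options yields exactly the exponent $\frac{nd}{2}$ with $d=\min\{\frac2p-1,1-\frac2q\}$. I expect this step to be the main obstacle: it requires stationary phase bounds that are uniform in $T$ when the phase $w$ is only finitely smooth, and the Bernstein/duality bookkeeping that produces $d$ exactly.

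The proof is completed by summing, that is, taking the supremum over $k$ of $2^{k\kappa}\min\{1,(t2^{k\sigma})^{-nd/2}\}$. In the low-frequency case $k\le 1$ with $\kappa\ge0$, the supremum is bounded for $t\le1$. For $t\ge1$ it equals $t^{-\kappa/\sigma}$ when $\kappa/\sigma\le nd/2$, attained at $2^{k\sigma}\sim t^{-1}$, and $t^{-nd/2}$ otherwise, attained at $k\sim 0$. Together these give $(1+t)^{-\min\{\kappa/\sigma,nd/2\}}$. For the difference $e^{itw}-e^{-itw}=2i\sin(tw)$, the extra bound $|\sin(tw)|\lesssim t2^{k\sigma}$ (a smooth symbol on the rescaled annulus) gives the improved factor $\min\{T,T^{-nd/2}\}$, so every weight is effectively multiplied by $\min\{1,t2^{k\sigma}\}$. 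This makes the supremum finite as $k\to-\infty$ whenever $\kappa+\sigma\ge0$, and the same optimization then yields \eqref{eq:estlow-plate}. In the high-frequency case $k\ge -1$, for $t\ge1$ and $\kappa/\sigma\le nd/2$ we have $2^{k\kappa}(t2^{k\sigma})^{-nd/2}\le t^{-nd/2}$ for all relevant $k$. For $t\in(0,1]$ the maximum of $2^{k\kappa}\min\{1,(t2^{k\sigma})^{-nd/2}\}$ occurs at $2^{k\sigma}\sim t^{-1}$, giving $t^{-\kappa/\sigma}$. If $\kappa<0$ it occurs at $k\sim0$, giving $C$. This proves \eqref{eq:esthi0deg-plate}.
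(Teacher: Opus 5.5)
The paper does not prove this lemma; it quotes Theorems 2 and 5 of \cite{DEL=2026}, which are stated for general phase functions. Your self-contained proof of the special case $w=|\xi|^\sigma$ is correct. The pieces all check out:
\begin{itemize}
\item the Littlewood--Paley reduction $\|m\|_{M_p^q}\lesssim\sup_k\|m\,\varphi(2^{-k}\cdot)\|_{M_p^q}$, valid precisely for $1<p\le 2\le q<\infty$;
\item the scaling identity with factor $2^{k\kappa}$ and $T=t2^{k\sigma}$;
\item the uniform kernel bound $(1+|T|)^{-n/2}$ on the unit annulus, coming from the full-rank Hessian of $|\zeta|^\sigma$ when $\sigma\neq1$;
\item Riesz--Thorin interpolation, followed by Bernstein on the output side when $q\ge p'$ or on the input side when $q<p'$, which gives exactly $(1+|T|)^{-nd/2}$.
\end{itemize}
The final optimisation over $2^{k\sigma}\sim t^{-1}$ reproduces (i), the $\kappa\ge-\sigma$ variant, and all cases of (ii).

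Your route makes the role of each hypothesis visible. The restriction $1<p$, $q<\infty$ is what the square-function step needs. Taking a supremum rather than a sum over $k$ is what makes the borderline cases $\kappa=0$, $\kappa=-\sigma$ and $\kappa/\sigma=nd/2$ work; a sum would diverge or lose a logarithm there. The condition $\sigma\neq1$ enters only through the rank of the Hessian. The cited theorems buy generality instead. They cover homogeneous phases of lower Hessian rank, and non-homogeneous phases such as the Boussinesq symbol of Lemma~\ref{Lemma-DEL}. There exact scaling is unavailable and the radial curvature vanishes at low frequencies, which produces the extra branch $\frac13(\kappa+(n-1)d)$ in $\delta_{\mathrm{BQ}}$.

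Two small points for the write-up:
\begin{itemize}
\item In the sine variant, state that $T^{-1}\sin(T|\zeta|^\sigma)\,|\zeta|^\eta\varphi(\zeta)$ is bounded in $C^N$ uniformly for $0<T\le1$. The pointwise bound $|\sin(tw)|\lesssim t2^{k\sigma}$ alone does not control an $M_p^q$ norm.
\item Run the Littlewood--Paley step on $f$ with $\hat f\in C_c^\infty(\mathbb{R}^n\setminus\{0\})$ and extend by density. This is harmless because $\eta+\sigma>-n$ makes the symbol locally integrable.
\end{itemize}
The stationary-phase step you flagged as the main obstacle is routine here. The function $|\zeta|^\sigma$ is $C^\infty$ on the annulus. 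Writing the phase as $T(y\cdot\zeta+|\zeta|^\sigma)$ with $y=x/T$, you integrate by parts when $|y|$ is away from the range of $|\nabla w|$ on the annulus. Otherwise you use the $y$-independent nondegenerate Hessian. Either way the bound is uniform in $x$.
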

\begin{remark}
Proposition \ref{Prop1H} for $\mu=0$ is already well known (see for instance \cite{Ebert-L}).
\end{remark}
\begin{remark}
In the proof of Theorem \ref{thm:Nonlinear-plate-1}, we will apply the estimate obtained in Proposition \ref{Prop1H} with $p=1+\delta$, where $\delta>0$ is sufficiently small, and $2\leq q<\infty$. Hence, the restriction on $\mu$ in \eqref{regularity} becomes $0<\mu\leq 2-\frac{n}{\sigma}$. This interval is nonempty provided that $n<2\sigma$.
\end{remark}
\subsubsection{Considerations in $Z_{\text{high}}$}
Thanks to Proposition \ref{Prop-Hankel} and Lemma \ref{hankel}, we may write
\[ (1-\chi(w(\xi))m\left(\frac{t-s}{1+s}, 0, \cdot\right)=C(1-\chi(w(\xi)) a\left(\frac{1+t}{1+s}w(\xi)\right)\,b(w(\xi))\, e^{i\frac{1+t}{1+s}w(\xi)-iw(\xi)} ,\]
where $a, b\in S^{-\frac12}$.
Using  the homogeneity of  $w$ and Lemma \ref{lemma:m1-for-Plate-BQ}, for $p\in (1, \infty)$ we have
\begin{align*}
\|w(\xi)^{1/2} b(w(\xi))\|_{M_p^p} & \leq C ,
\end{align*}
and
\[ \big\| w(\xi)^{1/2} a\left(\frac{1+t}{1+s}w(\xi)\right) \big\|_{M_p^p}=\left(\frac{1+t}{1+s}\right)^{-1/2}\| w(\xi)^{1/2} a(w(\xi)) \|_{M_p^p} \leq C \left(\frac{1+t}{1+s}\right)^{-1/2}. \]
Applying Lemma \ref{Lemma-DELH} with  $\eta=-\sigma$,  for $1< p\leq 2\leq q<\infty$ we get
\[ \big\| |\xi|^{-\sigma}(1-\chi(w(\xi)) e^{i\frac{1+t}{1+s}w(\xi)-iw(\xi)}\big\|_{M_p^q} \leq
C\,\left(\frac{1+t}{1+s}-1\right)^{-\frac{nd}2},
\]
provided that $\frac{n}{\sigma}\left(\frac1p-\frac1q\right)- \frac{nd}{2}\leq 1$. Therefore, using again Proposition \ref{Prop-Hankel} and \eqref{simple},  we obtain
\begin{align*}
\big\|\mathcal{F}^{-1}\big( (1-\chi(w(\xi))\psi(t,s,\xi) \big)&\ast u_1\big\|_{L^q} \lesssim (1+t)^{-\frac{\mu}2-\frac{nd}2}
(1+s)^{1+\frac{\mu}2 +\frac{nd}2-\frac{n}{\sigma}\left(\frac1p-\frac1q\right)} \|u_1\|_{L^p\cap L^2}.
\end{align*}
In the last inequality, since $d=d(p,q)=0$ when $p=2$ for all $q\geq 2$, the possible singular behavior of the solution at $t=s$ can be avoided under the additional assumption $u_1\in L^2$.
\subsubsection{Considerations at low frequencies}
\begin{itemize}
\item \textbf{Step 1:} Let us consider the first term in \eqref{simple2}.

Due to Proposition \ref{Prop-Hankel} and Lemma \ref{hankel}, we have
\begin{align*}
&\left|\chi(w(\xi))\left( 1-\chi\big(\frac{1+t}{1+s}w(\xi)\big) \right)  m\left(\frac{t-s}{1+s}, 0, \cdot\right)\right|=C \chi(w(\xi))\left( 1-\chi\big(\frac{1+t}{1+s}w(\xi)\big) \right) \\
&\qquad \times \left| a^{+}\left(\frac{1+t}{1+s}w(\xi)\right)\, e^{i\frac{1+t}{1+s}w(\xi)} H_{\rho}^{-}(w(\xi))-a^{-}\left(\frac{1+t}{1+s}w(\xi)\right)\, e^{-i\frac{1+t}{1+s}w(\xi)}H_{\rho}^{+}(w(\xi)) \right|,
\end{align*}
where $a^{\pm}\in S^{-\frac12}$. We consider only the first component in $m$ (since the analysis for the second one is similar). We have
\begin{align*}
&\chi(w(\xi))\left( 1-\chi\big(\frac{1+t}{1+s}w(\xi)\big) \right)H_{\rho}^{-}(w(\xi)) \,a^{+}\left(\frac{1+t}{1+s}w(\xi)\right)\, e^{i\frac{1+t}{1+s}w(\xi)} =: m_1(s,\xi)m_2(t,\xi),
\end{align*}
where
\begin{align*}
m_1(\xi) &:= \chi(w(\xi))w(\xi)^{\rho}H^{-}_{\rho}(w(\xi)), \\
m_2(t,\xi) &:=\left( 1-\chi\left(\frac{1+t}{1+s}w(\xi)\right) \right)a^{+}\left(\frac{1+t}{1+s}w(\xi)\right)\,w(\xi)^{-\rho} e^{i\frac{1+t}{1+s}w(\xi)} \\
&= \underbrace{\left(\frac{1+t}{1+s}w(\xi)\right)^{\frac12}a^{+}\left(\frac{1+t}{1+s}w(\xi)\right)}_{=:m_3(t,\xi)}\underbrace{ \left(1- \chi\left(\frac{1+t}{1+s}w(\xi)\right) \right) w(\xi)^{-\rho}\big(\frac{1+t}{1+s}w(\xi)\big)^{-\frac{1}{2}}e^{i\frac{1+t}{1+s}w(\xi)}}_{=:m_4(t,s, \xi)}.
\end{align*}
Thanks to  Lemma \ref{lemma:m1-for-Plate-BQ}, $m_3(t, s,\cdot)\in M_p^p$. We will show (see Lemma \ref{lemmaM1}) that $m_1\in M_p^p$ with $p\in (1, \infty)$.  
 By homogeneity and by Lemma \ref{Lemma-DELH}, we have
\[\|m_4(t,s, \cdot)\|_{M_p^q} =\left(\frac{1+t}{1+s}\right)^{\frac{1}{2}-\frac{\mu}{2}-\frac{n}{\sigma}\left(\frac1p-\frac1q\right)}\|m_4(0,\cdot)\|_{M_p^q} \leq \left(\frac{1+t}{1+s}\right)^{\frac{1}{2}-\frac{\mu}{2}-\frac{n}{\sigma}\left(\frac1p-\frac1q\right)} , \]
for all $1< p\leq 2\leq q<\infty$ provided that $\frac{n}{\sigma}\left(\frac1p-\frac1q\right)- \frac{nd}{2}\leq 1-\frac{\mu}{2}$.
\begin{lemma}\label{lemmaM1}
Let $\mu\in(0,1)$ and define
\[ m_1(s,\xi) := \chi((1+s)w(\xi))((1+s)w(\xi))^{\rho} H^{-}_{\rho}((1+s)w(\xi)), \]
where $\chi\in \mathcal{C}_c^\infty([0,\infty))$ is a cut--off function supported in $[0,2]$. Then, for any $1<p<\infty$, $m_1(s,\cdot)\in M_p^p(\mathbb{R}^n)$ and $ \|m_1(s,\cdot)\|_{M_p^p}\leq C$ for all $s\geq 0$.
\end{lemma}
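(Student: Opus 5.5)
Write $r=(1+s)w(\xi)$ and $F(r):=\chi(r)\,r^{\rho}H^-_\rho(r)$, so that $m_1(s,\xi)=F((1+s)w(\xi))$. The plan has four steps.

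\textbf{Step 1 (reduction to a one-variable symbol estimate).} I will show that $F$ satisfies
\[ |F^{(k)}(r)|\le C_k\, r^{-k}, \qquad r>0,\ k\in\mathbb N_0, \]
with $F$ supported in $[0,2]$.

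\textbf{Step 2 (uniform Mikhlin--H\"ormander bound).} Assuming Step 1, I argue exactly as in the proof of Lemma \ref{lemma:m1-for-Plate-BQ}. By the chain rule,
\[ \partial_\xi^\alpha m_1=\sum_{k=1}^{|\alpha|}F^{(k)}((1+s)w)\,(1+s)^k\sum\prod_j\partial_\xi^{\beta_j}w. \]
Combining this with $|\partial^\gamma_\xi w|\lesssim |\xi|^{\sigma-|\gamma|}$ and $w\ge c|\xi|^\sigma$, each term is bounded by $((1+s)w)^{-k}(1+s)^k|\xi|^{k\sigma-|\alpha|}\lesssim |\xi|^{-|\alpha|}$, uniformly in $s$. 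The term with $\alpha=0$ is bounded because $F$ is bounded. Lemma \ref{Thm:Mikhlin-Hormander} then gives $\|m_1(s,\cdot)\|_{M_p^p}\le C$ for every $1<p<\infty$. In the homogeneous case one could even use scaling and reduce to $s=0$; in the Boussinesq case the Remark after Lemma \ref{lemma:m1-for-Plate-BQ} covers the two different degrees of homogeneity.

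\textbf{Step 3 (small-argument structure of $r^\rho H^-_\rho$).} Step 1 is the main point. The derivative bounds only need to be checked for small $r$, since on any compact set $[\varepsilon,2]$ the function $H^-_\rho$ is smooth. Since $\rho=\frac{1-\mu}{2}\in(0,\frac12)$ is not an integer, \eqref{Eq:Hankel-property} gives
\[ r^\rho H^-_\rho(r)=(i\sin\rho\pi)^{-1}\big(e^{i\rho\pi}r^{\rho}J_\rho(r)-r^{\rho}J_{-\rho}(r)\big). \]
From the power series of the Bessel functions,
\[ r^{\rho}J_{-\rho}(r)=\sum_k c_k r^{2k}=:A(r^2), \qquad r^\rho J_\rho(r)=r^{2\rho}\sum_k d_k r^{2k}=:r^{2\rho}B(r^2), \]
where $A$ and $B$ are entire functions.

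\textbf{Step 4 (checking the derivative bounds).} The function $\chi(r)A(r^2)$ is smooth with bounded derivatives on $[0,2]$, so $|\partial_r^k(\chi A)|\lesssim 1\lesssim r^{-k}$ for $r\le2$. For the other piece, $|\partial_r^k(r^{2\rho}B(r^2)\chi(r))|\lesssim r^{2\rho-k}\le 2^{2\rho}r^{-k}$ on $(0,2]$, because $2\rho>0$. This proves Step 1.

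The only delicate point is the singular factor $r^{2\rho}$, whose derivatives blow up at $r=0$. These derivatives are exactly of the admissible size $r^{-k}$, because the exponent $2\rho$ is nonnegative. This is precisely where $\mu<1$ is used: the constant term of $r^\rho H^-_\rho$ does not vanish, and no negative power appears. The bounded smooth cutoff $\chi$ introduces no further difficulty.
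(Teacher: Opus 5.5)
Your proof is correct and follows the same route as the paper. The paper expands $r^\rho H^-_\rho$ via \eqref{Eq:Hankel-property}, derives symbol-type bounds, and concludes with the chain rule and Mikhlin--H\"ormander; it gets uniformity in $s$ by scaling, while you read it off directly from the chain-rule estimate, which is equivalent here. Your Steps 3--4 are in fact more careful than the paper, which asserts that $r^\rho H^-_\rho(r)$ is entire. That is false, since $r^\rho J_\rho(r)=r^{2\rho}B(r^2)$ with $2\rho=1-\mu\in(0,1)$, and your observation that $|\partial_r^k r^{2\rho}|\lesssim r^{-k}$ on $(0,2]$ is exactly what makes the argument valid.
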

\begin{proof}
First we remark that by homogeneity $\|m_1(s,\cdot)\|_{M_p^p}=\|m_1(0,\cdot)\|_{M_p^p}$ and by using \eqref{Eq:Hankel-property}, we find
\[ F(r):=r^{\rho} H^{-}_{\rho}(r)= r^{\rho}(i\sin(\rho\pi))^{-1}\big(e^{i\rho \pi}  J_{\rho}(r)-J_{-\rho}(r) \big) \quad \text{with} \quad r:=w(\xi)
\]
is an entire function  and $|w^{(k)}(r)|\leq C r^{\sigma-k}$ for any $k\in\mathbb{N}_0$ and $r\in [0,2]$. Moreover, since $\chi$ is supported in $[0,2]$, by the chain rule, for any $\gamma$ a multi-index, there exists $C_\gamma>0$ such that
\[ |\partial_\xi^\gamma m_1(s,\xi)| \leq   C_\gamma |\xi|^{-|\gamma|}, \qquad \xi\neq 0.\]
Choosing $|\gamma|\le [n/2]+1$, the Mikhlin--H\"{o}rmander multiplier theorem yields
\[ m_1(s,\cdot)\in M_p^p(\mathbb{R}^n) \quad\text{for all }1<p<\infty, \]
with the operator norm uniformly bounded.
\end{proof}

\item \textbf{Step 2:} Let us consider the second term in \eqref{simple2}.

In this zone, since $H^{\pm}_{\rho}=J_{\rho} \pm i Y_{\rho}$ and $\rho\notin \mathbb{Z}$, we use the following representation for the multiplier:
\begin{equation*}\label{Bessel}
m\left(\frac{t-s}{1+s}, 0, \xi\right)=2i\csc(\rho \pi)
\left| \begin{array}{cc}
J_{-\rho}\left( w(\xi) \right) & J_{-\rho}\left( \frac{1+t}{1+s}w(\xi) \right) \\
J_{\rho}\left( w(\xi) \right) &  J_{\rho}\left( \frac{1+t}{1+s}w(\xi) \right) \\
\end{array} \right|,
\end{equation*}
where $J_{\rho}$ and $Y_{\rho}$ denote the Bessel functions of the first and second kind, respectively. Then, we obtain
\[ |\tilde m(t, s, \xi)| := \left|\chi(w(\xi))\chi\left(\frac{1+t}{1+s}w(\xi)\right)m\left(\frac{t-s}{1+s}, 0, \xi\right)\right| \lesssim  (1+s)^{-\rho}(1+t)^{\rho}+(1+s)^{\rho}(1+t)^{-\rho}. \]
Let $q'$ denotes the conjugate exponent of $q$, and $r$ verifies
\[ \frac{1}{r}=\frac{1}{q'}-\frac{1}{p'}=\frac{1}{p}-\frac{1}{q}, \quad 1\leq p\leq 2\leq q\leq\infty. \]
Then, using the Hausdorff-Young and H\"older's inequalities, we estimate the following:
\begin{align*}
\big\| \mathcal{F}^{-1}\big( \tilde m(t, s, \xi) \big)\ast u_1 \big\|_{L^q} & \lesssim \| \tilde m(t, s, \xi) \hat{u}_1\|_{L^{q'}} \lesssim \| \tilde m(t, s, \cdot)\|_{L^r} \|\hat u_1\|_{L^{p'}} \lesssim\,\left(\frac{1+t}{1+s}\right)^{-\frac{n}{\sigma}\left(\frac{1}{p}-\frac{1}{q} \right)+\rho}
\|u_1\|_{L^p}\, ,
\end{align*}
provided that $\mu\in (0,1)$. In the last inequality we used that
\[ \| \tilde m(t, s, \xi)\|^r_{L^r} = \int_{|\xi|\leq \left(\frac{1+t}{1+s}\right)^{-\frac{1}{\sigma}}}\, d\xi \lesssim  \left(\frac{1+t}{1+s}\right)^{-\frac{n}{\sigma}}. \]
\end{itemize}
Summing up the previous two steps and  using again Proposition \ref{Prop-Hankel} and \eqref{simple},  we obtain
\begin{align*}
\|\chi(w(\xi))\psi(t,s,\cdot)\|_{M_p^q} &=C(1+t)^{\rho}(1+s)^{1-\rho}\|\chi(w(\xi))m(t,s,\cdot)\|_{M_p^q} \\
& \lesssim (1+t)^{1-\mu-\frac{n}{\sigma}\left( \frac{1}{p}-\frac{1}{q} \right)} (1+s)^{\mu}
\end{align*}
for all $1<p\leq 2\leq q<\infty$ and $\frac{n}{\sigma}\left(\frac1p-\frac1q\right)- \frac{nd}{2}\leq 1-\frac{\mu}{2}$.

\begin{proof}[Proof of Proposition \ref{Prop1H}]
At low frequencies we concluded that
\[ \left\| \mathcal{F}^{-1}\big( \chi(w(\xi))\psi(t,s,\xi) \big)\ast u_1 \right\|_{L^q} \lesssim (1+s)^{\mu}(1+t)^{1 - \mu - \frac{n}{\sigma}(\frac{1}{p} - \frac{1}{q})}\, \|u_1\|_{L^p}. \]
Hence is sufficient to remark that if $\mu\leq 2-\frac{2n}{\sigma}\left(\frac1p-\frac1q\right)+nd$,  the derived  estimate at high frequencies are better than the one at low frequencies, namely
\[(1+t)^{-\frac{\mu}2-\frac{nd}2}
(1+s)^{1+\frac{\mu}2 +\frac{nd}2-\frac{n}{\sigma}\left(\frac1p-\frac1q\right)}\leq (1+t)^{1 - \mu - \frac{n}{\sigma}(\frac{1}{p} - \frac{1}{q})}(1+s)^{\mu}.
\]
\end{proof}
\subsection{Linearized Boussinesq equation with time dependent damping}\label{sec:linear}
In this section, we consider the Cauchy problem for the linearized Boussinesq equation with time dependent damping
\begin{equation}\label{eq:LBQ}
\begin{cases}
u_{tt}-\Delta u +\Delta^2 u+ \dfrac{\mu}{1+t} u_t =0, & t\geq s\geq 0\,\,\, x\in\R^n,\\
u(s,x)=0, \quad u_t(s,x)=u_1(s, x), & x\in\mathbb{R}^n.
\end{cases}
\end{equation}
\begin{proposition}\label{Prop1}
Let $\mu\in (0,1)$, $1< p\leq 2\leq q <\infty$, $\nu,\nu_0\in \mathbb{R}$ and assume
\begin{equation*}
n\Big(\frac1p-\frac1q\Big)+\nu-\nu_0 \leq nd.
\end{equation*}
Then, the solution to \eqref{eq:LBQ} satisfies
\begin{equation*}
\|u(t, s, \cdot)\|_{L^q} \leq (1+t)^{-\frac{\mu}2}(1+s)^{\frac{\mu}2}\|\,|D|^{-1-\nu}\<D\>^{\nu_0-1}u_1\|_{L^p}
\begin{cases}
C\,(t-s)^{-\frac{n}{2}\left(\frac1p-\frac1q\right)-\frac{\nu-\nu_0}{2}}, & t-s\in (0, 1], \\
C\,(1+t-s)^{-\delta_\BQ}, & t-s>1,
\end{cases}
\end{equation*}
where we set
\[ \delta_\BQ=\min\Big\{ n\big(\frac1p-\frac1q\big)+\nu\,,\, \frac13\Big(n\big(\frac1p-\frac1q\big)+\nu+(n-1)d\Big)\,,\, \frac{nd}{2}\Big\}. \]
\end{proposition}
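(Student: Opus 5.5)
We follow the strategy used for Proposition \ref{Prop1H}, adapted to the non-homogeneous phase $w(\xi)=|\xi|\langle\xi\rangle$. Since homogeneity fails, we cannot use the scaling \eqref{simple}. Instead we work directly with the representation of Proposition \ref{Prop-Hankel},
\[
\psi(t,s,\xi)=\frac{\pi}{4i}(1+t)^{\rho}(1+s)^{1-\rho}\,m(t,s,\xi),
\]
and split the multiplier with the cut-offs $\chi_1,\chi_2,\chi_3$ into the zones $Z_1\cup Z_{\text{high}}$, $Z_2$ and $Z_3$. On the datum side we write $u_1=|D|^{1+\nu}\langle D\rangle^{1-\nu_0}\big(|D|^{-1-\nu}\langle D\rangle^{\nu_0-1}u_1\big)$. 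We must then estimate in $M_p^q$ the multipliers $\chi_j\psi\,|\xi|^{1+\nu}\langle\xi\rangle^{1-\nu_0}$.

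\textbf{Zone $\chi_1$ (the main piece).} Here both $(1+s)w$ and $(1+t)w$ are at least $1$. By the large-argument asymptotics of Lemma \ref{hankel} we have $H^\pm_\rho(\tau)=\tau^{-1/2}e^{\pm i\tau}a^\pm(\tau)$ with $\tau^{1/2}a^\pm\in S^0$. Hence
\[
\chi_1\psi=(1+t)^{-\mu/2}(1+s)^{\mu/2}\,w(\xi)^{-1}\,b_s(\xi)\,c_t(\xi)\big(e^{i(t-s)w(\xi)}-e^{-i(t-s)w(\xi)}\big),
\]
where $b_s=((1+s)w)^{1/2}a(\cdot)$ and $c_t$ are $t,s$-uniformly bounded in $M_p^p$. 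This follows from Lemma \ref{lemma:m1-for-Plate-BQ} and the subsequent remark, which allows $\eta_1=1$ at low frequencies and $\eta_2=2$ at high frequencies. The factor $(1+t)^{-\mu/2}(1+s)^{\mu/2}$ appears exactly as claimed, since $(1+t)^{\rho-1/2}=(1+t)^{-\mu/2}$. It remains to bound
\[
\big\||\xi|^{\nu}\langle\xi\rangle^{-\nu_0}\,\chi_1\,(e^{i(t-s)w}-e^{-i(t-s)w})\big\|_{M_p^q},
\]
using $w^{-1}|\xi|^{1+\nu}\langle\xi\rangle^{1-\nu_0}\sim|\xi|^\nu\langle\xi\rangle^{-\nu_0}$.

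This is the Boussinesq dispersive estimate, which we take from \cite{DEL=2026} and \cite{Cho-Ozawa=2007}, using a dyadic decomposition in $|\xi|$.
\begin{itemize}
\item At high frequencies $w\sim|\xi|^2$ has a nondegenerate Hessian. This gives the rate $(t-s)^{-nd/2}$ for $t-s\ge1$, and $(t-s)^{-\frac n2(\frac1p-\frac1q)-\frac{\nu-\nu_0}{2}}$ for $t-s\le1$. The hypothesis $n(\frac1p-\frac1q)+\nu-\nu_0\le nd$ ensures that the high-frequency dyadic sum converges.
\item At low frequencies $w\approx|\xi|+|\xi|^3/2$ behaves like a wave with Hessian of rank $n-1$, plus a weak curvature $|\xi|^3$ in the radial direction. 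On the dyadic piece $|\xi|\sim2^{-k}$ one has the trivial bound $2^{-k(n(1/p-1/q)+\nu)}$. Stationary phase gives the improved bound $2^{-k(n(1/p-1/q)+\nu)}\big(t2^{-k}\big)^{-(n-1)d/2}\big(t2^{-3k}\big)^{-d/2}$.
\item Optimizing over $k$ (the balancing scale is $2^{-k}\sim t^{-1/3}$) yields the minimum of the three exponents defining $\delta_{\BQ}$.
\end{itemize}
The cut-off $\chi_1$ only restricts to $w\gtrsim(1+s)^{-1}$, and this is harmless for the dyadic bound.

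\textbf{Zones $\chi_2,\chi_3$.} In $Z_3$ the Bessel representation gives $|m|\lesssim(1+s)^{-\rho}(1+t)^{\rho}+(1+s)^\rho(1+t)^{-\rho}$. We then use Hausdorff--Young and Hölder exactly as in Step 2 of the plate case. Because $|\xi|\lesssim(1+t)^{-1}$ there, integrating $|\xi|^{r\nu}$ over this ball gives decay $(1+t)^{-n(1/p-1/q)-\nu}$, which is at least the required rate since $\delta_{\BQ}\le n(\frac1p-\frac1q)+\nu$. For $Z_2$ we factor as $m_1(s,\xi)m_2(t,\xi)$ like in Step 1: $m_1\in M_p^p$ by Lemma \ref{lemmaM1} (adapted to the non-homogeneous $w$), and $m_2$ is handled by the same low-frequency dispersive bound as above. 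Collecting the powers of $(1+s)$ and $(1+t)$ gives again the prefactor $(1+t)^{-\mu/2}(1+s)^{\mu/2}$ times $(1+t-s)^{-\delta_{\BQ}}$, using $1+s\le1+t$.

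The main obstacle is the low-frequency oscillatory estimate with the degenerate phase $|\xi|\langle\xi\rangle$. Obtaining the middle exponent $\frac13(n(\frac1p-\frac1q)+\nu+(n-1)d)$ requires the careful Littlewood–Paley and stationary phase argument described above. If possible I would import it directly from \cite[Theorem 5]{DEL=2026}.
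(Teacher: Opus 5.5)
Your overall architecture is the paper's. You use the same zone splitting, the Hankel asymptotics combined with Lemma \ref{lemma:m1-for-Plate-BQ} on $\{(1+s)w\ge1\}$, the oscillatory bounds imported from \cite{DEL=2026} (Lemma \ref{Lemma-DEL} with $\eta=\nu-\nu_0$ at high and $\eta=\nu$ at low frequencies), and Hausdorff--Young plus H\"older in $Z_3$. These parts are fine. In $Z_3$, note that one integrates $|\xi|^{(1+\nu)r}$ against $|\psi|\lesssim(1+s)^{\mu}(1+t)^{1-\mu}$, which gives $(1+s)^{\mu}(1+t)^{-\mu-\kappa_{\low}}$. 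There $1+s\le1+t$ legitimately turns $(1+s)^{\mu}(1+t)^{-\mu}$ into $(1+s)^{\frac\mu2}(1+t)^{-\frac\mu2}$.

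The step that does not work as described is $Z_2$. If you factor as in Step 1 with the $m_1$ of Lemma \ref{lemmaM1}, i.e.\ with weight $((1+s)w)^{\rho}$, what is left is
\[
(1+t)^{\rho}(1+s)^{1-2\rho}(1+t)^{-\frac12}\,w^{-\frac12-\rho}|\xi|^{1+\nu}e^{i(1+t)w}
=(1+t)^{-\frac\mu2}(1+s)^{\mu}\,w^{-1+\frac\mu2}|\xi|^{1+\nu}e^{i(1+t)w}.
\]
So Lemma \ref{Lemma-DEL} has to be applied with $\eta=\nu+\frac\mu2$, and you get $(1+t)^{-\frac\mu2-\delta'}(1+s)^{\mu}$, where $\delta'$ is the value of $\delta_{\BQ}$ at $\kappa_{\low}+\frac\mu2$. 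As a function of $\kappa_{\low}$, $\delta_{\BQ}$ has slope $1$, $\frac13$ or $0$. Hence $\delta'<\delta_{\BQ}+\frac\mu2$ outside the purely linear regime. In the regime needed for Theorem \ref{thm:NBQ}, where $\delta_{\BQ}=\frac{nd}2$ is frozen, one even has $\delta'=\delta_{\BQ}$, and at $s=t/2$ your bound exceeds the claimed one by a factor of order $t^{\frac\mu2}$. The inequality $1+s\le1+t$ cannot repair this, because trading $(1+s)^{\frac\mu2}$ for $(1+t)^{\frac\mu2}$ destroys the prefactor $(1+t)^{-\frac\mu2}$.

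The missing idea is to use the support condition $(1+s)w\le2$ of $Z_2$ to absorb $((1+s)w)^{\frac\mu2}$. Take $m_1(s,\xi)=\chi((1+s)w)\,((1+s)w)^{\rho+\frac\mu2}H^-_\rho((1+s)w)$, with $\rho+\frac\mu2=\frac12$. This is still uniformly bounded in $M_p^p$ by the argument of Lemma \ref{lemmaM1}: by \eqref{Eq:Hankel-property} and Lemma \ref{hankel}(i), $r^{\frac12}H^-_\rho(r)$ is a combination of $r^{1-\frac\mu2}$ and $r^{\frac\mu2}$ times entire functions, both with positive exponents on $\supp\chi$. The leftover powers are then $(1+t)^{\rho}(1+s)^{\frac\mu2}$. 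The remaining multiplier $(1+t)^{-\frac12}\chi(w)|\xi|^{1+\nu}w^{-1}e^{i(1+t)w}$ has $M_p^q$-norm $\lesssim(1+t)^{-\frac12-\delta_{\BQ}}$ by Lemma \ref{Lemma-DEL} with $\eta=\nu$. You obtain $(1+t)^{-\frac\mu2-\delta_{\BQ}}(1+s)^{\frac\mu2}\le(1+t)^{-\frac\mu2}(1+s)^{\frac\mu2}(1+t-s)^{-\delta_{\BQ}}$, which is exactly how the paper closes $Z_2$.
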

\begin{remark}\label{remarkKey} Let $\kappa_{\low}=n\left(\frac1p-\frac1q\right)+\nu$. It is easy to verify that
\begin{equation*} \label{eq:deltaBQ}
\begin{split}
\delta_\BQ
    & = \min\left\{\kappa_\low, \frac{\kappa_\low}3+\frac{(n-1)}{3}d, \frac{nd}2\right\} = \begin{cases}
\kappa_\low, & \kappa_\low\leq\frac{n-1}2 \,d,\\
\frac{\kappa_\low}{3}+\frac{(n-1)}{3}d, & 0\leq \kappa_\low\leq \left(\frac{n}2+1\right) d,\\
\frac{nd}{2}, & \kappa_\low\geq \left(\frac{n}2 +1\right) d.
\end{cases}
\end{split}
\end{equation*}
We are interested in the case whether $\delta_\BQ=\frac{nd}2$, hence, for $q=p'$ we assume the condition
\[\kappa_\low=nd+\nu\geq \left(\frac{n}2+1\right) d.\]
This condition holds for all  $\nu\geq 0$ if $n\geq 2$ and for $\nu\geq 1/2$ if $n= 1$.
\end{remark}
%
From Theorems 2 and 5 in \cite{DEL=2026}, we have the following lemma.
\begin{lemma} \label{Lemma-DEL}
Let $1\leq p\leq 2\leq q\leq\infty$, $w(\xi)=|\xi|\sqrt{1+|\xi|^2}$ and $\eta\in \mathbb{R}$.  Let $d=d(p,q)$ and $\kappa$ as in~\eqref{eq:dH} and \eqref{eq:kappaH}, respectively. We assume that $\kappa >0$. Then
\begin{equation} \label{eq:estlow}
\| |\xi|^{\eta}e^{itw(\xi)}\chi(\xi) \|_{M_p^q} \leq (1+t)^{-\delta_\BQ},
\end{equation}
where we set
\[ \delta_\BQ=\min\Big\{ \kappa\,,\, \frac13\big(\kappa+(n-1)d\big)\,,\, \frac{nd}{2} \Big\}. \]
Estimate \eqref{eq:estlow} also holds if $\kappa=0$ and $1<p\leq 2\leq q<\infty$.\\
Moreover,
\begin{equation*}\label{eq:esthi0deg}
\| |\xi|^{\eta}e^{itw(\xi)}(1-\chi)(\xi) \|_{M_p^q} \leq \begin{cases}
C\,t^{-\frac{nd}2}, & t\geq1, \\
C\,t^{-\frac{\kappa}{2}},& t\in(0,1],
\end{cases}
\end{equation*}
provided that $\kappa\leq nd$.
\end{lemma}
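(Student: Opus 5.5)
The plan is to reduce the statement to dyadic pieces, prove an $L^1$–$L^\infty$ estimate for each piece by stationary phase, interpolate with the trivial $L^2$–$L^2$ bound, and sum. Fix a Littlewood–Paley partition $\sum_k\phi(2^{-k}\xi)$ and set $T_k(t):=\mathcal F^{-1}\big(|\xi|^\eta e^{itw(\xi)}\phi(2^{-k}\xi)\big)$. The basic estimates for each piece are:
\begin{itemize}
\item $\|T_k\|_{M_2^2}\lesssim 2^{k\eta}$;
\item $\|T_k\|_{M_1^\infty}\le \|\widehat{T_k}\|_{L^1}\lesssim 2^{k(n+\eta)}$;
\item $\|T_k\|_{M_1^\infty}\lesssim 2^{k(n+\eta)}\,(\text{decay from stationary phase})$.
\end{itemize}
Riesz–Thorin between the $L^2$ and $L^1$–$L^\infty$ bounds handles the conjugate line. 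Moving off it, we use Bernstein's inequality (or Young's inequality, since each piece has a compactly supported symbol). This gives, for each $k$, a bound of the form $2^{k\kappa}\,(\text{decay})^{d}$, with $d$ as in \eqref{eq:dH} and $\kappa$ as in \eqref{eq:kappaH}. The borderline cases $\kappa=0$ and $1<p\le2\le q<\infty$ require Littlewood–Paley square-function summation instead of the triangle inequality.

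\textbf{Low frequencies} ($2^k\lesssim1$). Here $w(r)=r\sqrt{1+r^2}$ with $r=|\xi|$, so $w'(r)\simeq1$ and $w''(r)=\frac{r(3+2r^2)}{(1+r^2)^{3/2}}\simeq r$. The Hessian of the radial function $w$ has one radial eigenvalue $w''(r)\simeq 2^k$ and $n-1$ angular eigenvalues $w'(r)/r\simeq 2^{-k}$. After rescaling $\xi=2^k\zeta$ and applying stationary phase (or the Littman/van der Corput estimate with these curvatures), we expect
\[
\|T_k(t)\|_{M_1^\infty}\lesssim 2^{k(n+\eta)}\big(1+t2^{3k}\big)^{-\frac12}\big(1+t2^{k}\big)^{-\frac{n-1}2}.
\]
Interpolating with the $L^2$ bound then yields $2^{k\kappa}(1+t2^{3k})^{-d/2}(1+t2^k)^{-(n-1)d/2}$. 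We sum over $k\le0$ by splitting at the two thresholds $2^k\simeq t^{-1}$ and $2^k\simeq t^{-1/3}$:
\begin{itemize}
\item the range $2^k\lesssim t^{-1}$ gives $t^{-\kappa}$, using $\kappa>0$;
\item the range $t^{-1}\lesssim 2^k\lesssim t^{-1/3}$ gives $t^{-(n-1)d/2}\sum 2^{k(\kappa-(n-1)d/2)}$, which is dominated by one of the endpoints and produces either $t^{-\kappa}$ or $t^{-(\kappa+(n-1)d)/3}$;
\item the range $t^{-1/3}\lesssim 2^k\lesssim1$ gives $t^{-nd/2}\sum 2^{k(\kappa-(n+2)d/2)}$, producing $t^{-nd/2}$ or again $t^{-(\kappa+(n-1)d)/3}$.
\end{itemize}
Taking the worst case gives $\delta_\BQ$. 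When the dyadic sum is logarithmic at a threshold, the exact min-expression still holds because $\delta_\BQ$ is the minimum and the equality cases coincide. Failing that, one loses an $\epsilon$, which we would avoid by the real-interpolation (Bourgain-type) summation trick.

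\textbf{High frequencies} ($2^k\gtrsim1$). Here $w\sim|\xi|^2$, and both $w''$ and $w'/r$ are comparable to $1$, so the Hessian is nondegenerate with size $\simeq1$ in all directions. Stationary phase gives $\|T_k\|_{M_1^\infty}\lesssim 2^{k(n+\eta)}(1+t2^{2k})^{-n/2}$, hence after interpolation $2^{k\kappa}(t2^{2k})^{-nd/2}$ whenever $t2^{2k}\ge1$. For $t\ge1$ the sum over $k\ge0$ converges to $t^{-nd/2}$ provided $\kappa<nd$; the endpoint $\kappa=nd$ needs the square-function or interpolation argument. For $t\in(0,1]$, the pieces with $2^k\lesssim t^{-1/2}$ give $\sum 2^{k\kappa}\simeq t^{-\kappa/2}$, and the remaining pieces give $t^{-\kappa/2}$ as well, again using $\kappa\le nd$.

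The main obstacle is the degenerate low-frequency stationary phase: we need a uniform bound that captures both the weak radial curvature $\sim 2^k$ and the tangential curvature $\sim2^{-k}$ for the non-homogeneous $w$. We would handle it by the rescaling $\xi=2^k\zeta$, which turns the phase into $t2^k(|\zeta|+2^{2k}|\zeta|^3/2+\dots)$, followed by polar coordinates: the angular integral gives the Bessel-type decay $(t2^k|x|)^{-(n-1)/2}$, and a one-dimensional van der Corput estimate with second-derivative bound $t2^{3k}$ handles the radial integral. A secondary obstacle is the endpoint summation at $\kappa=0$ and $\kappa=nd$.
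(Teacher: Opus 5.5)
The paper gives no argument of its own here; it imports the lemma from Theorems 2 and 5 of \cite{DEL=2026}. Your dyadic route is therefore an independent proof, and its core is sound. For $k\le0$ you get the per-piece bound $2^{k\kappa}(1+t2^{3k})^{-d/2}(1+t2^{k})^{-(n-1)d/2}$ from stationary phase (radial curvature $\simeq2^k$, angular curvature $\simeq2^{-k}$), Riesz--Thorin and Bernstein. Splitting at $2^k\simeq t^{-1}$ and $2^k\simeq t^{-1/3}$ then produces exactly the three branches of $\delta_{\mathrm{BQ}}$, and this shows where the exponent comes from, which the citation does not.

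The gap is your handling of the thresholds. You claim that at a logarithmic threshold the min-expression survives ``because the equality cases coincide''; this is false. Two branches of the minimum coinciding is precisely the case in which the $\simeq\log t$ pieces of the middle (or upper) range all have size $t^{-\delta_{\mathrm{BQ}}}$, so the triangle inequality gives $t^{-\delta_{\mathrm{BQ}}}\log t$. At the high-frequency endpoint $\kappa=nd$ the series $\sum_{k\ge0}2^{k(\kappa-nd)}$ even diverges, and Bourgain's trick only yields restricted weak type. For $1<p\le2\le q<\infty$ the cure is the square-function argument you reserve for $\kappa=0$, used throughout. Apply Littlewood--Paley and then Minkowski in $L^q$ ($q\ge2$), followed by reverse Minkowski and Littlewood--Paley in $L^p$ ($p\le2$). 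This gives $\|\sum_kT_k\|_{M_p^q}\lesssim\sup_k\|T_k\|_{M_p^q}$. The supremum of your per-piece bounds is $\lesssim(1+t)^{-\delta_{\mathrm{BQ}}}$ at low frequencies. At high frequencies, when $0<\kappa\le nd$, it is $t^{-nd/2}$ for $t\ge1$ and $t^{-\kappa/2}$ for $t\le1$. No case distinction and no logarithm appear.

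For $p=1$ or $q=\infty$ no summation device can close the gap, because the estimate as quoted fails at such borderline points. Take $n\ge2$, $(p,q)=(1,\infty)$ and $\eta=-\frac{n+1}{2}$, so that $d=1$, $\kappa=\frac{n-1}{2}=\frac{(n-1)d}{2}$ and $\delta_{\mathrm{BQ}}=\frac{n-1}{2}$. Pass to polar coordinates and use the large-argument asymptotics of $\widehat{d\sigma}$. The kernel of $|\xi|^{\eta}e^{itw(\xi)}\chi(\xi)$ at $x=te_1$ then equals $c\,t^{-\frac{n-1}{2}}\int_{1/t}^{t^{-1/3}}e^{it(w(r)-r)}\,\frac{dr}{r}+O(t^{-\frac{n-1}{2}})$ with $c\neq0$. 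Since $0\le t(w(r)-r)\le\frac{tr^3}{2}\le\frac12$ on that interval, the dyadic pieces add in phase and the modulus is $\gtrsim t^{-\frac{n-1}{2}}\log t$. The $M_1^\infty$ norm is the supremum of the kernel, so the bound $(1+t)^{-\delta_{\mathrm{BQ}}}$ is false. Likewise, take $p=1$, $2<q<\infty$ and $\eta=-\frac{n}{q}$, so that $\kappa=nd$. Stationary phase shows the high-frequency kernel behaves like $t^{-\frac n2}(|x|/t)^{-\frac nq}$ as $|x|\to\infty$. It is therefore not in $L^q$, and its $M_1^q$ norm is infinite. So your proof should claim the threshold and endpoint cases only for $1<p\le2\le q<\infty$. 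For $p=1$ or $q=\infty$ they must be excluded or stated with a loss.
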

In the following, for both low and high frequencies, we estimate the function $g(\xi)=w(\xi)^{-1}$ by a suitable power $|\xi|^{\eta}$ in $M_p^p$, in the sense that $\|g\,|\xi|^{-\eta}\chi\|_{M_p^p}\leq C$. This is straightforward when $1<p<\infty$, since the result follows directly from the Mikhlin--H\"ormander theorem. An analogous estimate also holds when $\chi$ is replaced by $1-\chi$.
\subsubsection{Considerations in $Z_{\text{high}}$}
In $Z_{\text{high}}$, we have $w(\xi)\sim |\xi|^2$. Thus, from Proposition \ref{Prop-Hankel} and Lemma \ref{hankel}, we get
\begin{align*}
\big(1-\chi(w(\xi)\big)\psi(t,s,\xi) &= C\big(1-\chi(w(\xi)\big)(1+t)^{\rho}(1+s)^{1-\rho}a((1+t)w(\xi))\,b((1+s)w(\xi))\, e^{i(t-s)w(\xi)} \\
&= C(1+t)^{\rho}(1+s)^{1-\rho}\underbrace{((1+t)w(\xi))^{\frac12}a((1+t)w(\xi))}_{\in M_p^p}\underbrace{((1+s)w(\xi))^{\frac12}b((1+s)w(\xi))}_{\in M_p^p} \\
& \qquad \times \big(1-\chi(w(\xi)\big)|\xi|^{\nu-\nu_0}e^{i(t-s)w(\xi)}|\xi|^{-2-\nu+\nu_0}(1+t)^{-\frac12}(1+s)^{-\frac12},
\end{align*}
where $a, b\in S^{-\frac12}$ and we used Lemma \ref{lemma:m1-for-Plate-BQ}. Thus, applying Lemma \ref{Lemma-DEL} with $\eta=\nu-\nu_0$, we find
\[ \big\| |\xi|^{\nu-\nu_0}e^{itw(\xi)}\big(1-\chi(w(\xi)\big) \big\|_{M_p^q} \lesssim
\begin{cases}
(1+t-s)^{-\frac{nd}2}, & t-s\geq1, \\
(t-s)^{-\frac{\kappa}{2}},& t-s\in(0,1].
\end{cases}
\]
Thus, we obtain
\begin{align*}
\big\|\mathcal{F}^{-1}\big( 1-\chi&(w(\xi))\psi(t,s,\xi) \big)\ast u_1\big\|_{L^q} \\
& \lesssim (1+t)^{-\frac{\mu}2}(1+s)^{\frac{\mu}2} \|\<D\>^{\nu_0-\nu-2}u_1\|_{L^p}\begin{cases}
(1+t-s)^{-\frac{nd}2}, & t-s\geq1, \\
(t-s)^{-\frac{\kappa}{2}},& t-s\in(0,1].
\end{cases}
\end{align*}
provided that $n\big(\frac1p-\frac1q\big)+\nu-\nu_0 \leq nd$.
\subsubsection{Considerations in $Z_{1}$}
In $Z_1$, we  have $w(\xi)\sim |\xi|$. Therefore, from Proposition \ref{Prop-Hankel} and Lemma \ref{hankel}, we have
\begin{align*}
\psi(t,s,\xi) &= C(1+t)^{\rho}(1+s)^{1-\rho}a((1+t)w(\xi))\,b((1+s)w(\xi))\, e^{i(t-s)w(\xi)} \\
&= C(1+t)^{\rho}(1+s)^{1-\rho}\underbrace{((1+t)w(\xi))^{\frac12}a((1+t)w(\xi))}_{\in M_p^p}\underbrace{((1+s)w(\xi))^{\frac12}b((1+s)w(\xi))}_{\in M_p^p} \\
& \qquad \times |\xi|^{\nu}e^{i(t-s)w(\xi)}|\xi|^{-\nu-1}(1+t)^{-\frac12}(1+s)^{-\frac12},
\end{align*}
where $a, b\in S^{-\frac12}$ and we used Lemma \ref{lemma:m1-for-Plate-BQ}. Multiplying by the localization function and using multiplier properties, we obtain
\[ \big\|\chi(w(\xi))\big(1-\chi((1+s)w(\xi))\big)\psi(t,s,\xi)\big\|_{M_p^q}
\leq \|\chi(w(\xi))\psi(t,s,\xi)\|_{M_p^q}, \]
thanks to
\[ \|1-\chi((1+s)w(\xi))\|_{M_p^p}\leq C \]
for all $p\in(1,\infty)$, uniformly in $s\ge0$. Indeed, let
\[ h(s,\xi):=1-\chi((1+s)w(\xi)), \qquad s\ge0. \]
By construction of $\chi$, one has
\[ h(s,\xi)=0 \quad \text{if}\,\,\, (1+s)w(\xi)\leq1 \qquad \text{and} \qquad h(s,\xi)=1 \quad \text{if}\,\,\, (1+s)w(\xi)\geq2. \]
Hence, for every multi-index $\alpha\neq0$, the support of $\partial_\xi^\alpha h(s,\xi)$ is contained in the set
\[ \Omega_s:=\{\xi\in\mathbb R^n:\ 1<(1+s)w(\xi)<2\}. \]
By Fa\`a di Bruno's formula, for every multi-index $\alpha\neq0$,
\[ \partial_\xi^\alpha h(s,\xi) =
\sum_{k=1}^{|\alpha|}
\chi^{(k)}((1+s)w(\xi))(1+s)^k
\sum_{\substack{\beta_1+\cdots+\beta_k=\alpha\\ |\beta_j|\ge1}} C_{\alpha,\beta} \prod_{j=1}^{k}\partial_\xi^{\beta_j}w(\xi),
\]
where $C_{\alpha,\beta}$ are suitable combinatorial constants. Since $\chi^{(k)}((1+s)w(\xi))=0$ unless $\xi\in\Omega_s$, and
\[ |\partial_\xi^\gamma w(\xi)|\le C_\gamma |\xi|^{1-|\gamma|}, \qquad \gamma\in\mathbb N^n, \]
we infer that each summand is bounded by
\[ C(1+s)^k \prod_{j=1}^{k}|\xi|^{1-|\beta_j|} = C(1+s)^k |\xi|^{k-|\alpha|} \lesssim |\xi|^{-k}|\xi|^{k-|\alpha|} = |\xi|^{-|\alpha|}, \]
where we used $(1+s)\sim |\xi|^{-1}$ on $\Omega_s$. Therefore,
\[ |\partial_\xi^\alpha h(s,\xi)| \leq C_\alpha |\xi|^{-|\alpha|}, \quad \alpha\in\mathbb N^n, \]
with constants independent of $s\ge0$. This shows that $h=h(s,\xi)$ satisfies the Mikhlin--H\"ormander condition uniformly with respect to $s$, and hence $\|h(s,\cdot)\|_{M_p^p}\leq C$, $p\in(1,\infty)$, with $C$ independent of $s$.

Thus, applying Lemma \ref{Lemma-DEL} with $\eta=\nu$ and $\kappa>0$, we obtain
\[ \big\| |\xi|^{\nu}e^{i(t-s)w(\xi)}\chi(w(\xi)\|_{M_p^q} \lesssim (1+t-s)^{-\delta_{\mathrm{BQ}}}, \]
and
\begin{align*}
\big\|\mathcal{F}^{-1}\big(\chi(w(\xi))(1-\chi&((1+s)w(\xi)))\psi(t,s,\xi)\big)\ast u_1\big\|_{L^q} \\
& \lesssim\, (1+t)^{-\frac{\mu}{2}}(1+s)^{\frac{\mu}{2}}
(1+t-s)^{-\delta_{\mathrm{BQ}}} \|\,|D|^{-\nu-1}u_1\|_{L^p}, \quad t\geq s.
\end{align*}
\subsubsection{Considerations in $Z_{2}$}
By Proposition \ref{Prop-Hankel} and Lemma \ref{hankel}, we have
\[ |\chi(w(\xi))\chi_2(t, s, \xi)\psi(t,s,\xi)| =C(1+t)^{\rho}(1+s)^{1-\rho}\chi(w(\xi))\chi_2(t, s, \xi) m(t,s,\xi) , \]
and
\[ m(t,s,\xi)=\big| a^{+}((1+t)w(\xi))\, e^{i(1+t)w(\xi)} H_{\rho}^{-}((1+s)w(\xi))-a^{-}((1+t)w(\xi))\, e^{-i(1+t)w(\xi)} H_{\rho}^{+}((1+s)w(\xi)) \big|, \]
where $a^{\pm}\in S^{-\frac12}$. We consider only the first component in $m$ (since the analysis for the second one is similar). We have
\begin{align*}
\chi(w(\xi))\chi_2&(t, s, \xi)\, |\xi|^{1+\nu}\, m(t,s,\xi) \\
&= \chi_2(t, s, \xi) H^{-}_{\rho}((1+s)w(\xi))|\xi|^{1+\nu}\,a((1+t)w(\xi))\, e^{i(1+t)w(\xi)}  \\
&= \chi_2(t, s, \xi)((1+s)w(\xi))^{\rho-\rho}H^{-}_{\rho}((1+s)w(\xi))|\xi|^{1+\nu}\,a((1+t)w(\xi))\, e^{i(1+t)w(\xi)}  \\
&= m_1(s,\xi)m_2(t,\xi)(1+s)^{-\rho-\frac{\mu}{2}},
\end{align*}
where
\begin{align*}
m_1(s,\xi) &:= \chi((1+s)w(\xi))((1+s)w(\xi))^{\rho+\frac{\mu}{2}}H^{-}_{\rho}((1+s)w(\xi)) \\
m_2(t,\xi) &:= \big(1- \chi((1+t)w(\xi)) \big) \chi(w(\xi))|\xi|^{1+\nu} w(\xi)^{-\frac{1}{2}}a((1+t)w(\xi))\,e^{i(1+t)w(\xi)} \\
&= \underbrace{\big( 1-\chi((1+t)w(\xi)) \big)((1+t)w(\xi))^{\frac12} a((1+t)w(\xi))}_{=:m_3(t,\xi)} \, \chi(w(\xi))|\xi|^{1+\nu} w(\xi)^{-\frac{1}{2}}((1+t)w(\xi))^{-\frac{1}{2}}e^{i(1+t)w(\xi)} \\
&= m_3(t,\xi) \chi(w(\xi))|\xi|^{1+\nu} w(\xi)^{-1}(1+t)^{-\frac{1}{2}}e^{i(1+t)w(\xi)}.
\end{align*}
Let
\[ m_4(t,\xi) := \chi(w(\xi))|\xi|^{1+\nu} w(\xi)^{-1}(1+t)^{-\frac{1}{2}}e^{i(1+t)w(\xi)}. \]
Since $\chi(w(\xi))w(\xi)\sim |\xi|$, by Lemma \ref{Lemma-DEL}, with $\eta=\nu$, we have
\[ \|m_4(t,\cdot)\|_{M_p^q} \leq (1+t)^{-\frac{1}{2}-\delta_{\text{BQ}}}. \]
Thanks to  Lemma 3.2,   $ \|m_3(t,\cdot)\|_{M_p^p}\leq C$ for all $t\geq 0$, whereas following as in the proof of Lemma \ref{lemmaM1} we conclude that
$\|m_1(s,\cdot)\|_{M_p^p}\leq C$ for all $s\geq 0$. Therefore, it holds
\[ (1+t)^{\rho}(1+s)^{1-\rho} \| \chi(w(\xi))\chi_2(t, s, \xi)m(t,s,\cdot)\|_{M_p^q} \lesssim (1+t)^{-\frac{\mu}{2}-\delta_{\text{BQ}}}(1+s)^{\frac{\mu}{2}}. \]
\subsubsection{Considerations in $Z_{3}$}
In this zone, since $H^{\pm}_{\rho}=J_{\rho} \pm i Y_{\rho}$ and $\rho\notin \mathbb{Z}$, we use the following representation for the multiplier:
\begin{equation*}\label{Bessel-BQ}
m(t, s, \xi)=2i\csc(\rho \pi)
\left| \begin{array}{cc}
J_{-\rho}\left( (1+s)w(\xi) \right) & J_{-\rho}\left( (1+t)w(\xi) \right) \\
J_{\rho}\left( (1+s)w(\xi) \right) &  J_{\rho}\left( (1+t)w(\xi) \right) \\
\end{array} \right|,
\end{equation*}
where $J_{\rho}$ and $Y_{\rho}$ denote the Bessel functions of the first and second kind, respectively. Then, we obtain
\[ |\chi_3(t, s,\xi) m(t, s, \xi)| \lesssim (1+s)^{-\rho}(1+t)^{\rho} + (1+s)^{\rho}(1+t)^{-\rho}, \]
so that
\[ |\chi_3(t, s,\xi) \psi(t,s,\xi)|\lesssim (1+s)^{1-2\rho}(1+t)^{2\rho} + (1+s). \]
When $\rho\not\in \mathbb{Z}$, using the Hausdorff--Young and Hölder inequalities, and setting
\[ \frac{1}{r}=\frac{1}{p}-\frac{1}{q}, \qquad 1\le p\le 2\le q\le\infty, \]
we estimate
\begin{align*}
\big\|\mathcal{F}^{-1}\big(\chi_3(t,s,\xi)\psi(t,s,\xi)\big)\ast u_1\big\|_{L^q}
&= \big\|\mathcal{F}^{-1}\big(\chi_3(t,s,\xi)|\xi|^{1+\nu}\psi(t,s,\xi)\big)\ast |D|^{-1-\nu}u_1\big\|_{L^q} \\
&\lesssim \|\chi_3(t,s,\xi)|\xi|^{1+\nu}\psi(t,s,\xi)\|_{L^r}\, \||D|^{-1-\nu}u_1\|_{L^p}.
\end{align*}
By the support property of $\chi_3$, we have $|\xi|\le (1+t)^{-1}$, and hence
\[ \|\chi_3(t,s,\xi)|\xi|^{1+\nu}\|_{L^r}^r = \int_{|\xi|\le (1+t)^{-1}}|\xi|^{(1+\nu)r}\,d\xi \lesssim (1+t)^{-n-(1+\nu)r}, \]
which implies
\[ \|\chi_3(t,s,\xi)|\xi|^{2}\|_{L^r} \lesssim (1+t)^{-n\left(\frac{1}{p}-\frac{1}{q}\right)-1-\nu}. \]
Consequently, using the bound on $\psi(t,s,\xi)$, we arrive at
\[ \big\|\mathcal{F}^{-1}\big(\chi_3(t,s,\xi)\psi(t,s,\xi)\big)\ast u_1\big\|_{L^q} \lesssim (1+t)^{-n\left(\frac{1}{p}-\frac{1}{q}\right)-1-\nu+\rho+|\rho|}(1+s)^{1-\rho-|\rho|} \,\||D|^{-1-\nu}u_1\|_{L^p}. \]
Since $\mu<1$, we conclude the following $L^p$--$L^q$ estimates:
\begin{align*}
\big\|\mathcal{F}^{-1}\big( \chi_3(t,s,\xi)\psi(t,s,\xi) \big)\ast u_1\big\|_{L^q}
\lesssim (1+s)^{\mu} (1+t)^{-\mu-n\left(\frac{1}{p}-\frac{1}{q}\right)-\nu} \,\||D|^{-1-\nu}u_1\|_{L^p}.
\end{align*}
\begin{proof}[Proof of Proposition \ref{Prop1}]
It is sufficient to remark that the derived estimates in $Z_2, Z_3$ are better than the one in $Z_1$. Indeed, we may estimate
\begin{align*}
(1+t)^{-\mu-\delta_{\text{BQ}}}(1+s)^{\mu} &\leq (1+t)^{-\frac{\mu}2-\delta_{\text{BQ}}}(1+s)^{\frac{\mu}2}.
\end{align*}
\end{proof}
\section{Proof of global in time existence results} \label{Sec:Proofs}
Let us begin by introducing some notations that will be used throughout this section. First, we denote by $u^{\lin}=u^{\lin}(t,x)$ the solution to the linear Cauchy problem \eqref{Eq:LinearProblem} with $s=0$, and by $K=K(t,\tau,x)$ the corresponding fundamental solution. Next, we define the operator $N$ as follows:
\[ N: u\in X(T) \rightarrow Nu(t,x) =  u^{\lin}(t,x)+u^{\nl}(t,x), \]
where $X(T)$ is a suitable function space in which we seek solutions, and
\begin{align*}
u^{\nl}(t,x) &= \int_0^tK(t,\tau,x)\ast_{(x)}f(u)d\tau,
\end{align*}
where $f(u)=|u(\tau,x)|^\alpha$ or $f(u)=\Delta|u(\tau,x)|^\alpha$. By Duhamel's principle, the solution to \eqref{Eq:SemilinearProblem} can be represented as a fixed point of the operator $N$.

To establish the existence of such a fixed point, we will prove that $N$ satisfies the following estimates:
\begin{align}
\label{Eq:inequality-Nu} \|Nu\|_{X(T)} &\leq C_0\| u_1\|_{\mathcal{A}} + C_1(t)\|u\|_{X(T)}^\alpha, \\
\label{Eq:inequality-NuNv} \|Nu-Nv\|_{X(T)} &\leq C_2(t)\|u-v\|_{X(T)}\big( \|u\|_{X(T)}^{\alpha-1}  + \|v\|_{X(T)}^{\alpha-1} \big),
\end{align}
where $\mathcal{A}$ denotes the function space of the initial data $u_1$. These estimates enable us to apply the contraction mapping principle, thereby obtaining a unique solution $u\in X(T)$ to the equation $Nu=u$. In particular,
\begin{itemize}
\item for large initial data, we obtain local (in time) existence by exploiting the fact that $C_1(t), C_2(t)\to 0$ as $t\to 0$;
\item for small initial data, we obtain global-in-time existence provided that $\max\{C_1(t)\,,\, C_2(t)\}\leq C$ for all $t\in [0, \infty)$, where $C$ is a suitable constant.
\end{itemize}
In the  next results the following lemma comes into play.
\begin{lemma}\label{Lem:Auxiliary-GlobalExistence}
Let $\nu >-1$ and $\beta \in \mathbb{R}$. Then, it holds
\begin{equation*}
\int_{0}^{t}(t-s)^{\nu}\,(1+s)^{\beta}\,ds\lesssim
\begin{cases}
(1+t)^{\nu}, &   \mbox{ if }\,\, \beta <-1,\\
(1+t)^{\nu}\,log(e+t), &  \mbox{ if }\,\, \beta =-1,\\
(1+t)^{1+\nu+\beta}, &  \mbox{ if }\,\,  \beta >-1,
\end{cases}
\end{equation*}
and
\[\int_{0}^{t}(t-s)^{\nu}\,e^{-c(t-s)}(1+s)^{\beta}\,ds\lesssim (1+t)^\beta .\]
Moreover, the estimate is also valid if $(t-s)^{\nu}$ is replaced by $(1+t-s)^{\nu}$ in the integral.
\end{lemma}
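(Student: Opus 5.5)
The proof is elementary: split the integral at $s=t/2$ and compare powers on each half. We may assume $t\geq 1$. For $t\in[0,1]$ we have $1+s\sim1$, so the first integral is $\lesssim \int_0^t (t-s)^{\nu}ds = t^{1+\nu}/(1+\nu)\lesssim 1$, which is consistent with all three right-hand sides since they are $\sim1$ there. The same holds with $(1+t-s)^\nu$ in place of $(t-s)^\nu$, and for the exponentially damped integral.

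For $t\geq1$ we write $\int_0^t=\int_0^{t/2}+\int_{t/2}^t$.

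On $[0,t/2]$ we have $t-s\sim t\sim 1+t$, so $(t-s)^\nu\sim(1+t)^\nu$ for either sign of $\nu$. The contribution is therefore $\sim(1+t)^\nu\int_0^{t/2}(1+s)^\beta ds$. This last integral is bounded if $\beta<-1$, is $\sim\log(e+t)$ if $\beta=-1$, and is $\sim(1+t)^{1+\beta}$ if $\beta>-1$. These give exactly the three claimed cases.

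On $[t/2,t]$ we have $1+s\sim1+t$, so the contribution is $\sim(1+t)^\beta\int_{t/2}^t(t-s)^\nu ds\sim(1+t)^{\beta}t^{1+\nu}\sim(1+t)^{1+\nu+\beta}$; this is where $\nu>-1$ is used, to guarantee integrability. If $\beta\geq-1$ this is dominated by the first-half bound. If $\beta<-1$ then $1+\nu+\beta<\nu$, so it is dominated by $(1+t)^\nu$.

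With $(1+t-s)^\nu$ the argument is identical. The only change is $\int_{t/2}^t(1+t-s)^\nu ds\lesssim(1+t)^{1+\nu}$, valid for $\nu>-1$.

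For the exponentially damped integral, the same split works. On $[0,t/2]$ the factor $e^{-c(t-s)}\leq e^{-ct/2}$ absorbs any polynomial growth of $(t-s)^\nu$ and of $\int_0^{t/2}(1+s)^{\beta}ds$, hence is $\lesssim(1+t)^\beta$. On $[t/2,t]$ we have $(1+s)^\beta\sim(1+t)^\beta$, and $\int_0^\infty \tau^\nu e^{-c\tau}d\tau<\infty$ since $\nu>-1$.

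There is no real obstacle here. The only points needing care are keeping track of the sign of $\nu$ in the equivalence $t-s\sim t$ on the first half, and checking that the second-half term never dominates.
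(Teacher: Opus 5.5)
The paper states this lemma without proof, as a standard auxiliary estimate, so there is no argument of its own to compare against. Your proof is the standard one and it is correct in all three cases, and also for the $(1+t-s)^{\nu}$ and exponentially damped variants:

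\begin{itemize}
\item you split at $s=t/2$ and use $t-s\sim 1+t$ on $[0,t/2]$ and $1+s\sim 1+t$ on $[t/2,t]$ when $t\geq 1$;
\item you use the trivial bound $\int_0^t(t-s)^{\nu}\,ds\lesssim 1$ when $t\in[0,1]$.
\end{itemize}

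The only tacit hypothesis you should state explicitly is $c>0$ in the second estimate. You rely on it both for $\int_0^\infty \tau^{\nu}e^{-c\tau}\,d\tau<\infty$ and for the decay of the factor $e^{-ct/2}$.
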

\subsection{Application to the nonlinear plate-type equation}
In this section, we consider the Cauchy problem for semilinear plate-type equation
\begin{equation*} \label{eq:NP}
\begin{cases}
u_{tt} + (-\Delta)^\sigma u + \dfrac{\mu}{1+t}u_t = |u|^\alpha & t>0,\ x\in\mathbb{R}^n,\\
u(0,x)=0, \quad u_t(0,x)=u_1(x), & x\in\mathbb{R}^n,
\end{cases}
\end{equation*}
where $\sigma\geq 2$ and $\alpha>1$.
\begin{proof}[Proof of Theorem \ref{thm:Nonlinear-plate-1}]
We define the solution space
\[  X(T) = \big\{u\in \mathcal{C}\big( [0,\infty), L^2(\mathbb{R}^n)\cap L^{r}(\mathbb{R}^n) \big): \ \|u\|_{X(T)}<\infty \big\}, \quad \text{where} \quad  r\in (\alpha,\infty). \]
The corresponding norm
\[ \|u\|_{X(T)} = \sup_{t\in[0,T],\, q\in[2,r]}(1+t)^{\mu-1+\frac{n}{\sigma}\left( \frac{1}{1+\delta}-\frac{1}{q} \right)}\|u\|_{L^q}. \]
Since $u_1 \in L^1(\R^n)\cap L^2(\R^n) = \mathcal{A}$, from Proposition \ref{Prop1H}, for $0<\mu <\min\left\{1, 2-\frac{n}{\sigma}\right\}$ we immediately conclude
\[ \|u^{\lin}\|_{X(T)} = \|K(t,0,x)\ast u_1(x)\|_{X(T)} \leq C_0\|u_1\|_{\mathcal{A}}. \]
Therefore, to conclude \eqref{Eq:inequality-Nu}, it remains to prove
\[ \|u^{\nl}\|_{X(T)} \lesssim \|u\|_{X(T)}^\alpha.\]
From Proposition \ref{Prop1H}, for  $0<\mu < \min\left\{1, 2-\frac{n}{\sigma}\right\}$ we have for all $2\leq q\leq r$
\begin{align*}
\|u^{\nl}(t,\cdot)\|_{L^q} &\lesssim \int_0^t \|K(t,s,\cdot)\ast |u(s,\cdot)|^\alpha\|_{L^q}\,ds \\
&\lesssim \int_0^{t} (1+t)^{1-\mu-\frac{n}{\sigma}\left(\frac{1}{1+\delta}-\frac{1}{q}\right)} (1+s)^\mu  \|\,|u(s,\cdot)|^\alpha\|_{L^{1+\delta}}\,ds \\
&\lesssim (1+t)^{1-\mu-\frac{n}{\sigma}\left(\frac{1}{1+\delta}-\frac{1}{q}\right)} \int_0^{t} (1+s)^\mu \|u(s,\cdot)\|_{L^{\alpha(1+\delta)}}^\alpha \,ds \\
& \lesssim (1+t)^{1-\mu-\frac{n}{\sigma}\left(\frac{1}{1+\delta}-\frac{1}{q}\right)} \|u\|_{X(T)}^\alpha \int_0^{t} (1+s)^{\mu+\alpha\left(1-\mu-\frac{n}{\sigma}\left(\frac{1}{1+\delta}-\frac{1}{\alpha(1+\delta)}\right)\right)} ds.
\end{align*}

The last integral is finite under the condition
\[ \mu+\alpha\Big(1-\mu-\frac{n}{\sigma(1+\delta)}\left(1-\frac{1}{\alpha}\Big) \right)<-1, \qquad \text{that is,} \qquad \alpha>1+\frac{2}{\mu+\frac{n}{\sigma}-1}. \]
Therefore, we conclude
\[ \|u^{\nl}(t,\cdot)\|_{L^q}
\lesssim (1+t)^{1-\mu-\frac{n}{\sigma}\left(\frac{1}{1+\delta}-\frac{1}{q}\right)} \|u\|_{X(T)}^\alpha. \]
Next we turn to the proof of the Lipschitz condition \eqref{Eq:inequality-NuNv}. Due to the fact that
\[ |\,|u(\tau,x)|^\alpha - |v(\tau,x)|^\alpha| \lesssim |u(s,x)-v(s,x)|\big( |u(s,x)|^{\alpha-1}+|v(s,x)|^{\alpha-1} \big), \]
from H\"{o}lder's inequality we obtain
\begin{align*}
\big\| |u(s,\cdot)|^\alpha-|v(s,\cdot)|^\alpha \big\|_{L^{1+\delta}} &\lesssim \| u(s,\cdot)-v(s,\cdot) \|_{L^{\alpha(1+\delta)}} \big( \|u(s,\cdot)\|_{L^{\alpha(1+\delta)}}^{\alpha-1}+\|v(s,\cdot)\|_{L^{\alpha(1+\delta)}}^{\alpha-1} \big).
\end{align*}
Thus, following the same ideas to what we did to estimate  $\|u^{\nl}(t,\cdot)\|_{L^q}$, we can obtain \eqref{Eq:inequality-NuNv}. In this way the proof of Theorem \ref{thm:Nonlinear-plate-1} is completed.
\end{proof}
\subsection{Application to the Boussinesq equation}\label{sec:nonlinear}
In this section, we consider the Cauchy problem for semilinear Boussinesq equation
\begin{equation} \label{eq:NBQ}
\begin{cases}
u_{tt}-\Delta u +\Delta^2 u+ \dfrac{\mu}{1+t} u_t =\Delta|u|^\alpha & t>0,\ x\in\R^n,\\
u(0,x)=0, \quad u_t(0,x)=u_1(x), & x\in\R^n,
\end{cases}
\end{equation}
where $\alpha>1$. We define the operator
\[ (Fu)(t,\cdot) =\int_0^t \Delta\, E(t-s)\ast f(u(s,\cdot))\,ds, \]
where $E=E(t)$ is the fundamental solution to the linear problem \eqref{eq:LBQ}, i.e., $v(t,s, \cdot)= E(t,s, \cdot)\ast u_1$.

If $n\geq 2$, applying~ Proposition \ref{Prop1}, with $s=0=\nu$ and $\nu_0=nd$, we get
\begin{equation*} \label{eq:crucial}
\| v(t, \cdot) \|_{L^q} \leq C\,(1+t)^{-\delta_{BQ}-\frac{\mu}{2}} \|\<D\>^{nd-1}|D|^{-1}u_1\|_{L^p}.
\end{equation*}
  If $n=1$, we apply
Proposition \ref{Prop1}, with $s=0$, $\nu=1/2$ and $\nu_0=3/2$, to get
\begin{equation*}
\| v(t, \cdot) \|_{L^q} \leq C\,(1+t)^{-\delta_{BQ}-\frac{\mu}{2}} \|\<D\>^{\frac12}|D|^{-\frac32}u_1\|_{L^p}.
\end{equation*}
Thanks to Remark \ref{remarkKey},  $\delta_\BQ=\frac{nd}2$.
Therefore, to maximize the decay rate we shall fix $p$ and $q$ on the conjugate line, that is,
\[ p=1+\frac1\alpha, \quad q=1+\alpha, \qquad \text{hence}, \qquad d(p,q)=\frac{\alpha-1}{\alpha+1}\,. \]
The critical exponent $\alpha_{\text{crit}}$ for the existence of global-in-time solutions is obtained when $\alpha_{\text{crit}}$ solves
\begin{equation*} \label{eq:tauStrauss}
\alpha\Big( \frac{nd}2+\frac{\mu}2 \Big)=1+\frac{\mu}2, \qquad \text{i.e.}, \qquad \frac{n}2\frac{\alpha-1}{\alpha+1}+\frac{\mu}{2}=\frac1\alpha\Big( 1+\frac{\mu}{2} \Big).
\end{equation*}
\begin{proof}[Proof of Theorem \ref{thm:NBQ}]
We define the solution space
\[  X = \big\{u\in L^\infty([0,\infty), L^{\alpha+1}): \ \|u\|_X\leq R \big\} \]
for $R>0$ that will be fixed later, and the corresponding norm
\[ \|u\|_X = \sup_{t>0} (1+t)^{\delta_\BQ+\frac{\mu}2}\|u(t)\|_{L^{\alpha+1}}, \quad \delta_\BQ=\frac{n}2\frac{\alpha-1}{\alpha+1}. \]
Let $E(t,s, \cdot)\ast u_1$ be the solution to the linear problem~\eqref{eq:LBQ}.
We notice that $nd<2$ is equivalent to $\alpha<1 +4/(n-2)$ when $n\geq3$, so that from the estimate above we derive
\begin{equation*} \label{eq:decayBQ-1}
\|E(t,0, \cdot)\ast u_1\|_{L^q} \leq C_1 (1+t)^{-\delta_\BQ-\frac{\mu}2} \,\|u_1\|_{\mathcal A}.
\end{equation*}
In particular, we fix $R=2C_1\|u_1\|_{\mathcal A}$, so that $\|E(t,0, \cdot)\ast u_1\|_X \leq R/2$. We now prove that the operator
\[ (Nu)(t,\cdot)= E(t,0, \cdot)\ast u_1(t,\cdot) + (Fu)(t,\cdot) \]
is a contraction on~$X$. Let $g(\tau,\cdot)=f(u(\tau,\cdot))-f(v(\tau,\cdot))$. We modify the estimate to reduce the regularity of $\Delta g$, namely, in Proposition \ref{Prop1} we set  $\nu=\nu_0=1$ at low frequencies and   $\nu=\nu_0=0$ at high frequencies, to get
\[ \| E(t, s, \cdot)\ast \Delta g(\tau,\cdot)\|_{L^{\alpha+1}} \leq (1+t)^{-\frac{\mu}2}(1+s)^{\frac{\mu}2} \begin{cases}
C\,(1+t-s)^{-\delta_\BQ}\,\||D|^{-2} \Delta g(\tau,\cdot)\|_{L^{1+\frac1\alpha}}\,, & t-s\geq1,\\
C\,(t-s)^{-\frac{nd}2}\,\|\<D\>^{-2} \Delta g(\tau,\cdot)\|_{L^{1+\frac1\alpha}}\,, & t-s\in(0,1].
\end{cases} \]
Recalling that $nd<2$, the singularity $(t-s)^{-\frac{nd}2}$ is integrable. Hence,
\[ \begin{split}
& \int_0^{t-1} \| E(t,s, \cdot)\ast \Delta g(s, \cdot)\|_{L^{\alpha+1}} ds \leq C\,(1+t)^{-\frac{\mu}2}\int_0^{t-1} (1+s)^{\frac{\mu}2} (t-s)^{-\delta_\BQ}\,\|g(s,\cdot)\|_{L^{1+\frac1\alpha}}\,ds,\\
& \int_{t-1}^t \| E(t, s, \cdot))\ast \Delta g(s,\cdot)\|_{L^{\alpha+1}} ds \leq C\,(1+t)^{-\frac{\mu}2}\int_{t-1}^t (1+s)^{\frac{\mu}2} (t-s)^{-\frac{nd}2}\,\|g(s,\cdot)\|_{L^{1+\frac1\alpha}}\,ds.
\end{split} \]
Since $u,v\in X$, we get
\[ \begin{split}
\|\<D\>^{-2}\Delta g(s,\cdot)\|_{L^{1+\frac1\alpha}}
    & \leq \||D|^{-2}\Delta g(s,\cdot)\|_{L^{1+\frac1\alpha}} \approx \|g(s,\cdot)\|_{L^{1+\frac1\alpha}} \\
    & \leq C\,R^{\alpha-1} \|u-v\|_X\,(1+s)^{-\alpha\left(\delta_\BQ+\frac{\mu}2\right)}.
\end{split} \]
We notice that
\[ \delta_\BQ \leq \frac{nd}2<1, \]
hence, using $\alpha\big(\delta_\BQ+\frac{\mu}2\big)>1+\frac{\mu}2$, from Lemma \ref{Lem:Auxiliary-GlobalExistence} we immediately obtain
\[ \begin{split}
& \int_0^{t-1} (t-s)^{-\delta_\BQ}\,(1+s)^{-\alpha\left(\delta_\BQ+\frac{\mu}2\right)+\frac{\mu}2}\,ds \lesssim (1+t)^{-\delta_\BQ},\\
& \int_{t-1}^t (t-s)^{-\frac{nd}2}\,(1+s)^{-\alpha\left(\delta_\BQ+\frac{\mu}2\right)+\frac{\mu}2}\,ds   \lesssim (1+t)^{-\delta_\BQ}.
\end{split} \]
In turn, we get
\[ \|(Nu-Nv)(t,\cdot)\|_{L^{\alpha+1}}\leq \int_0^t \| E(t,s, \cdot)\ast \Delta g(s,\cdot)\|_{L^{\alpha+1}} ds \leq C_2\,R^{\alpha-1} \|u-v\|_X\,(1+t)^{-\delta_\BQ-\frac{\mu}2}. \]
Recalling that $R=2C_1\|u_1\|_{\mathcal A}$ and letting $\|u_1\|_{\mathcal A}$ sufficiently small to obtain $C_2\,R^{\alpha-1}\leq 1/2$, we derive
\[ \|(Nu-Nv)\|_X \leq \frac12\,\|u-v\|_X. \]
Therefore, $F:X\to X$ and it is a contraction. By Banach's contraction principle, there is a unique $u\in X$ such that $Fu=u$, that is, $u$ is a solution to~\eqref{eq:NBQ}. Moreover, $\|u\|_X \leq R=2C_1\|u_1\|_{\mathcal A}$, that is, we proved~\eqref{eq:decayNIBQ}.
\end{proof}
%

\section*{Appendix}
\addcontentsline{toc}{section}{Appendix}
\renewcommand{\thesection}{A}
\setcounter{equation}{0}
\setcounter{proposition}{0}
\setcounter{lemma}{0}
In this section, we will recall some tools from special functions that have been applied in the treatments of the linear Cauchy problem \eqref{Eq:LinearProblem}.

\begin{lemma}[Bessel, Weber and Hankel functions, \cite{Bateman}] \label{hankel}
For the solutions of the Bessel equation of order $\gamma$ the following asymptotic estimates hold:
\begin{enumerate}[(i)]
\item The Bessel function $J_{\gamma}(\tau)$
\[ \Gamma_{\gamma}(\tau)=\tau^{-\gamma}J_{\gamma}(\tau),\]
is entire in $\gamma$ and $\tau$. In particular, for small arguments we have
\begin{equation*} \label{Besselest}
|J_{\gamma}(\tau)| \lesssim \tau^{\gamma}, \quad 0<\tau <1.
\end{equation*}
\item The Weber function $Y_{\gamma}(\tau)$ satisfies for every integer $n$
\[ Y_{n}(\tau)= \frac2{\pi}J_{n}(\tau)\ln\tau + A_n(\tau),\]
where $\tau^{n}A_{n}(\tau)$ is entire, non-null for $\tau=0$  and
\begin{equation*} \label{SecondBesselest}
|A_n(\tau)| \lesssim \tau^{-n}, \quad 0<\tau <1.
\end{equation*}
\item The Hankel functions $H^{\pm}_{\gamma}=J_{\gamma}\pm iY_{\gamma}$ satisfy
\[2(H^{\pm}_{\gamma})'(\tau)=H^{\pm}_{\gamma-1}(\tau)- H^{\pm}_{\gamma+1}(\tau) \quad  and \quad \tau (H^{\pm}_{\gamma})'(\tau)=\tau H^{\pm}_{\gamma-1}(\tau)-\gamma H^{\pm}_{\gamma}(\tau).\]
\item For large arguments $\tau\geq K>0$, $H^{\pm}_{\gamma}(\tau)$ can be written as
\begin{equation*}\label{HankelHigh}
H^{\pm}_{\gamma}(\tau)=e^{\pm i\tau} a_{\gamma}^{\pm}(\tau),
\end{equation*}
where $a_{\gamma}^{\pm}(\tau) \in S^{-\frac12}(K, \infty)$  is a classical symbol of order $-\frac12$.  More precisely, we have the following asymptotic behaviors \cite[Equations (10.2.5) and (10.2.6)]{Handbook=2010}:
\begin{align*}
H_{\gamma}^{+}(\tau) \sim \sqrt{\frac{2}{\pi \tau}}e^{i\left( \tau-\frac{1}{2}\gamma\pi-\frac{1}{4}\pi \right)} \qquad \text{and} \qquad H_{\gamma}^{-}(\tau) \sim \sqrt{\frac{2}{\pi \tau}}e^{-i\left(\tau-\frac{1}{2}\gamma\pi-\frac{1}{4}\pi \right)}.
\end{align*}
\item For small arguments $0<\tau\leq K <1$, we have
\begin{align*}\label{HankelLow}
|H^{\pm}_{\gamma}(\tau)|\lesssim
\begin{cases}
\tau^{-|\gamma|} & \text{if} \quad  \gamma\neq 0, \\
-\ln(\tau) & \text{if} \quad \gamma= 0.
\end{cases}
\end{align*}
\end{enumerate}
\end{lemma}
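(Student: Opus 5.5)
The statement collects classical facts on Bessel, Weber and Hankel functions. I would derive all of them from two sources: the power series of $J_\gamma$, and the Hankel--Schl\"afli integral representation of $H^\pm_\gamma$. Where possible I would defer to \cite{Bateman} and \cite[Ch.~10]{Handbook=2010} for the computations. I would proceed item by item, since later items use earlier ones.

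For (i), I start from
\[
J_\gamma(\tau)=\sum_{k\ge0}\frac{(-1)^k}{k!\,\Gamma(k+\gamma+1)}\Big(\frac{\tau}{2}\Big)^{2k+\gamma}.
\]
This gives $\Gamma_\gamma(\tau)=2^{-\gamma}\sum_k (-1)^k(\tau/2)^{2k}/(k!\,\Gamma(k+\gamma+1))$. Since $1/\Gamma$ is entire and the series converges locally uniformly in $(\gamma,\tau)$, $\Gamma_\gamma$ is entire in both variables. Boundedness of $\Gamma_\gamma$ on $[0,1]$ then yields $|J_\gamma(\tau)|\lesssim\tau^\gamma$.

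For (iii), I differentiate the series termwise to get $\tau J_\gamma'=\tau J_{\gamma-1}-\gamma J_\gamma$ and $2J_\gamma'=J_{\gamma-1}-J_{\gamma+1}$. For noninteger orders I pass to $Y_\gamma=(J_\gamma\cos\gamma\pi-J_{-\gamma})/\sin\gamma\pi$ by linearity, and for integer orders by continuity in $\gamma$. The identities then hold for $H^\pm_\gamma=J_\gamma\pm iY_\gamma$.

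For (ii), I compute $Y_n=\lim_{\gamma\to n}Y_\gamma$ by l'H\^opital, which gives
\[
Y_n=\frac1\pi\big(\partial_\gamma J_\gamma-(-1)^n\partial_\gamma J_{-\gamma}\big)\big|_{\gamma=n}.
\]
Differentiating the series in $\gamma$ produces the term $\frac2\pi J_n\ln(\tau/2)$, a series with digamma coefficients that is $\tau^n$ times an entire function, and, from the poles of $1/\Gamma(k-\gamma+1)$ for $k<n$, the finite sum $-\frac1\pi\sum_{k<n}\frac{(n-k-1)!}{k!}(\tau/2)^{2k-n}$. Collecting these gives $A_n$ with $\tau^nA_n$ entire and nonzero at $0$ (the value there is $-2^n(n-1)!/\pi$ when $n\ge1$), and hence $|A_n|\lesssim\tau^{-n}$. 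Item (v) then follows. For $\gamma\notin\Z$, I write $H^\pm_\gamma$ as a combination of $J_{\pm\gamma}$ as in \eqref{Eq:Hankel-property} and apply (i), so the term $J_{-|\gamma|}\sim\tau^{-|\gamma|}$ dominates. For integer $\gamma$, I use (ii) together with $J_{-n}=(-1)^nJ_n$; when $\gamma=0$ the logarithm is the dominant term.

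Item (iv) is the main obstacle, because it requires symbol estimates $|\partial_\tau^k a^\pm_\gamma(\tau)|\lesssim\tau^{-1/2-k}$ uniformly on $[K,\infty)$, not merely leading asymptotics. For $\operatorname{Re}\gamma>-\tfrac12$ I would use
\[
H^\pm_\gamma(\tau)=\sqrt{\tfrac{2}{\pi\tau}}\,\frac{e^{\pm i(\tau-\frac{\gamma\pi}{2}-\frac\pi4)}}{\Gamma(\gamma+\frac12)}\int_0^\infty e^{-u}u^{\gamma-\frac12}\Big(1\pm\frac{iu}{2\tau}\Big)^{\gamma-\frac12}du .
\]
This defines $a^\pm_\gamma$ as $\tau^{-1/2}$ times an integral $I(\tau)$. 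Each $\tau$-derivative of the factor $(1\pm iu/2\tau)^{\gamma-1/2}$ produces a factor $u/\tau^2$ times a bounded quantity, because $|1\pm iu/2\tau|\ge1$. Against $e^{-u}u^{\gamma-1/2}$ this gives $|I^{(k)}(\tau)|\lesssim\tau^{-k}$, and hence $a^\pm_\gamma\in S^{-1/2}$. Letting $\tau\to\infty$ inside the integral gives the stated leading terms, since $\int_0^\infty e^{-u}u^{\gamma-1/2}\,du=\Gamma(\gamma+\frac12)$. For $\operatorname{Re}\gamma\le-\tfrac12$ I would shift the order upward using the recurrence $H^\pm_{\gamma-1}=\tfrac{\gamma}{\tau}H^\pm_\gamma+(H^\pm_\gamma)'$ from (iii). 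Since $(e^{\pm i\tau}a)'=e^{\pm i\tau}(\pm i a+a')$ and $\tau^{-1}S^{-1/2}\subset S^{-1/2}$ on $[K,\infty)$, the symbol class is preserved under this shift.
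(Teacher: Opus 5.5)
Your derivation is correct, but the paper takes a different route: it gives no proof of this lemma at all. It simply quotes the facts from \cite{Bateman}, and refers to \cite[(10.2.5)--(10.2.6)]{Handbook=2010} for the large-argument behaviour. You instead rebuild everything from classical representations:
\begin{itemize}
\item the power series of $J_\gamma$ for (i) and (iii);
\item the l'H\^opital limit defining $Y_n$ for (ii), and hence (v);
\item the Poisson--Hankel integral with the factor $(1\pm iu/2\tau)^{\gamma-1/2}$, plus the order-lowering recurrence, for (iv).
\end{itemize}
The citation buys brevity, and through Hankel's full expansion it also covers the word ``classical''. Your argument supplies something the paper uses but does not literally get from its citation. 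The cited DLMF formulas give only leading-order asymptotics. Lemma~\ref{lemma:m1-for-Plate-BQ} and the zone estimates, however, rely on the derivative bounds $|\partial_\tau^k a^\pm_\gamma(\tau)|\lesssim\tau^{-1/2-k}$ on $[K,\infty)$, and your differentiation under the integral gives these directly. If you want ``classical'' in the strict polyhomogeneous sense, Taylor-expand $(1\pm iu/2\tau)^{\gamma-1/2}$ with remainder inside the same integral. Only the $S^{-1/2}$ bounds are ever used in the paper.

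A few points to tidy up.
\begin{itemize}
\item In (iv), the factor multiplying powers of $x=\pm iu/(2\tau)$ is not bounded when $\operatorname{Re}\gamma$ is large. The clean statement is $\partial_\tau^k(1+x)^{\gamma-1/2}=\tau^{-k}\sum_{j=1}^k c_{jk}\,x^j(1+x)^{\gamma-1/2-j}$, together with $|x|\le|1+x|$. So the differentiated integrand is $O\big(\tau^{-k}(1+u)^{(\operatorname{Re}\gamma-1/2)_+}\big)$ uniformly for $\tau\ge K$, which suffices against $e^{-u}u^{\operatorname{Re}\gamma-1/2}$.
\item When you lower the order via the recurrence, also note that $a'+\gamma a/\tau=O(\tau^{-3/2})$. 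Then the leading term is inherited, because $\pm i\,e^{\pm i(\tau-\gamma\pi/2-\pi/4)}=e^{\pm i(\tau-(\gamma-1)\pi/2-\pi/4)}$. Alternatively, use $H^\pm_{-\gamma}=e^{\pm i\gamma\pi}H^\pm_\gamma$ directly.
\item In (ii), the finite sum comes from the zeros of $1/\Gamma(k-\gamma+1)$ (the poles of $\Gamma$), not from poles of $1/\Gamma$.
\item Also in (ii), the case $n=0$ needs $A_0(0)=\frac{2}{\pi}(\gamma_E-\ln 2)\neq 0$, where $\gamma_E$ is Euler's constant.
\item The item can only be meant for $n\ge 0$. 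For $n<0$ one has $A_n=(-1)^nA_{|n|}$, so $\tau^nA_n$ has a pole at $0$.
\item In (iii), passing from $J_{\pm\gamma}$ to $Y_\gamma$ is not pure linearity. You need both recurrences, since $J_{-\gamma}$ satisfies the other one. You also need $\cos(\gamma\pm1)\pi=-\cos\gamma\pi$ and $\sin(\gamma\pm1)\pi=-\sin\gamma\pi$.
\end{itemize}
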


\section*{Acknowledgments}
Halit S. Aslan is supported by the S\~ao Paulo Research Foundation (FAPESP), Grant No.~2025/24251-0.
Marcelo R. Ebert is partially supported by the S\~ao Paulo Research Foundation (FAPESP), Grant No.~2024/12753-8.

\end{document}